\theoremstyle{definition}
\newtheorem{thm}{Theorem}[section]
\newtheorem{prop}[thm]{Proposition}
\newtheorem{cor}[thm]{Corollary}
\newtheorem{lem}[thm]{Lemma}
\newtheorem{defn}[thm]{Definition}
\newtheorem*{rem}{Remark}
\numberwithin{equation}{section}
\def\cal#1{\text{$\mathcal{#1}$}}
\def\lie#1{\mathfrak{#1}}
\def\tlie#1{\tilde{\mathfrak{#1}}}
\def\gb#1{{\mbox{\boldmath $#1$}}}
\def\wt{{\rm wt}}
\def\chl{{\rm char}_\ell}
\def\ch{{\rm char}}
\def\opl_#1^#2{\text{\scriptsize$\bigoplus\limits_{\text{\footnotesize$#1$}}^{\text{\footnotesize$#2$}}$}}
\def\otm_#1^#2{\text{\scriptsize$\bigotimes\limits_{\text{\footnotesize$#1$}}^{\text{\footnotesize$#2$}}$}}
\def\mult^#1_#2{{\rm mult}^{#1}_{#2}}
\def\ideg{{\rm indeg}}
\renewcommand{\thefootnote}
\begin{document}

\flushbottom

\title[On Multiplicity Problems for Hyper Loop Algebras]{On Multiplicity Problems for\\ Finite-Dimensional Representations of\\ Hyper Loop Algebras}

\author[D. Jakeli\'c]{Dijana Jakeli\'c}
\address{Department of Mathematics, Statistics, and Computer Science, \vspace{-8pt}}
\address{University of Illinois at Chicago, Chicago, IL 60607-7045, USA} \email{dijana@math.uic.edu}

\author[A. Moura]{Adriano Moura}
\address{UNICAMP - IMECC, Campinas - SP, 13083-970, Brazil.} \email{aamoura@ime.unicamp.br}
\thanks{The research of A.M. is partially supported by CNPq and FAPESP. A.M also thanks the Department of Mathematics, Statistics, and Computer Science of the University of Illinois at Chicago for its hospitality and support during the period this note was written.}

\keywords{hyperalgebras, loop groups, affine Kac-Moody algebras, finite-dimensional representations}
\subjclass[2000]{17B65; 17B10; 20G42}

\begin{abstract}
Given a hyper loop algebra over a non-algebraically closed field, we address multiplicity problems in the underlying abelian tensor category of finite-dimensional representations. Namely, we give formulas for the $\ell$-characters of the simple objects,  the Jordan-H\"older multiplicities of the Weyl modules, and the  Clebsch-Gordan coefficients.
\end{abstract}

\maketitle

\section*{Introduction}
Let $\cal C$ be a Jordan-H\"older tensor category. Roughly speaking, this is an abelian category with finite-length  objects equipped with a tensor product (for a precise definition of abelian and tensor categories see,  for instance, \cite[Chapter 5]{cpbook}). Given two objects $V$ and $W$ of $\cal C$ with $V$ simple, let $\mult^{\cal C}_V(W)$ be the multiplicity of $V$ as an irreducible constituent of $W$ in the category $\cal C$. We will often refer to $\mult^{\cal C}_V(W)$ as the Jordan-H\"older multiplicity of $V$ inside $W$ in the category $\cal C$. Formulas for Jordan-H\"older multiplicities make one of the most exciting subjects in combinatorial representation theory. For instance, if $\lie g$ is a Kac-Moody algebra, the Jordan-H\"older multiplicities for an integrable module $W$ may be computed from the character of $W$. In order to do that one has to know the characters of the irreducible integrable modules. These are, in turn, given by the celebrated Weyl-Kac character formula which, in some special cases, may be used to recover some famous identities including Rogers-Ramanujan's identities (see \cite{clm,lw} and references therein). An important subclass of multiplicity problems is the one of computing Clebsch-Gordan coefficients, i.e., of computing $\mult^{\cal C}_V(W)$  in a given category $\cal C$ when $W$ is a tensor product of two simple objects. In the case of a Kac-Moody algebra these coefficients may, in principle, be found as a consequence of the Weyl-Kac character formula.

In this note we address some of the aforementioned  problems for the category $\cal C(\tlie g)_\mathbb K$ of finite-dimensional representations of the hyper loop algebra $U(\tlie g)_\mathbb K$ over a field $\mathbb K$. Here, $\lie g$ denotes a fixed simple finite-dimensional Lie algebra over the complex numbers. The finite-dimensional representation theory of hyper loop algebras was initiated in \cite{jmhla,jmnacf}. As remarked in the introduction of \cite{jmnacf}, in an appropriate algebraic geometric framework, it should turn out that studying finite-dimensional representations of these algebras is equivalent to studying finite-dimensional representations of certain algebraic loop groups (in the defining characteristic).
We remark that the hyperalgebra $U(\lie g)_\mathbb K$ of the underlying Chevalley group of universal type is a subalgebra of $U(\tlie g)_\mathbb K$.

It was shown in \cite{jmnacf} that the passage from algebraically closed fields to non-algebraically closed ones is much more interesting in the context of hyper loop algebras than in the case of the hyperalgebra $U(\lie g)_\mathbb K$. For instance, multiplicity formulas, such as the ones mentioned in the first paragraph, for the category $\cal C(\lie g)_\mathbb K$ of finite-dimensional $U(\lie g)_\mathbb K$-modules are invariant under field extensions. However, it was already clear from several results and examples in \cite{jmnacf} that these problems have a much richer relation with Galois theory in the case of hyper loop algebras.

The purpose of this note is to solve certain multiplicity problems for $\cal C(\tlie g)_\mathbb K$. Namely, we find formulas for the following: (1) the $\ell$-characters of the simple objects,  (2) the Jordan-H\"older multiplicities of the Weyl modules, and (3) the Clebsch-Gordan coefficients. By a solution we mean the following. Suppose $\mathbb K$ is a non-algebraically closed field and $\mathbb F$ is an algebraic closure of $\mathbb K$. Our formulas express the multiplicities for the category $\cal C(\tlie g)_\mathbb K$ in terms of the corresponding multiplicities for the category $\cal C(\tlie g)_\mathbb F$ via Galois theory. We also review the literature related to these problems in the algebraically closed setting. It turns out that the multiplicities in the category $\cal C(\tlie g)_\mathbb F$ may be expressed in terms of the corresponding formulas for the category $\cal C(\lie g)_\mathbb F$. We give bibliographic references for the later and remark that some of these problems are still open in the category $\cal C(\lie g)_\mathbb F$ when $\mathbb F$ has positive characteristic.

The paper is organized as follows. In \S\ref{s:pre}, we review several prerequisites such as the construction of the hyperalgebras and hyper loop algebras as well as the relevant results about their finite-dimensional representations. In \S\ref{s:cg}, we present the results about the Clebsch-Gordan problem. In \S\ref{s:jh}, we study the $\ell$-characters of the simple objects of $\cal C(\tlie g)_\mathbb K$. This is equivalent to computing their Jordan-H\"older multiplicities in the category of finite-dimensional representations of a certain polynomial subalgebra of $U(\tlie g)_\mathbb K$. We end the section by expressing the results of \S 2 in terms of $\ell$-characters.
Finally, \S\ref{s:jhw} is concerned with the computation the Jordan-H\"older multiplicities for the Weyl modules.

\section{Preliminaries}\label{s:pre}

\subsection{}

 Throughout the paper, let $\mathbb F$ be a fixed algebraically closed field and $\mathbb K$ a  subfield of  $\mathbb F$ having $\mathbb F$ as an algebraic closure. Also,
$\mathbb C, \mathbb Z,\mathbb Z_+,\mathbb N$ denote the sets of complex numbers, integers, non-negative integers, and positive integers, respectively. Given a ring $\mathbb A$, the underlying multiplicative group of units is denoted by $\mathbb A^\times$. The dual of a vector space $V$ is denoted by $V^*$. The symbol $\cong$ means ``isomorphic to''.

If $\mathbb L$ is a field and $A$ an associative  unitary  $\mathbb L$-algebra, the expression ``$V$ is an $A$-module'' means $V$ is an $\mathbb L$-vector space equipped with an $A$-action, i.e., a homomorphism of associative unitary $\mathbb L$-algebras $A\to {\rm End}_{\mathbb L}(V)$.  The action of $a\in A$ on $v\in V$ is denoted simply by $av$.
If $V$ is an $\mathbb L$-vector space and $\mathbb M$ is a field extension of $\mathbb L$, set $V^{\mathbb M}=\mathbb M\otimes_{\mathbb L} V$ and identify $V$ with $1\otimes V\subseteq V^\mathbb M$. If $V$  is an $A$-module, then $V^\mathbb M$ is naturally an $A^\mathbb M$-module.

The next lemma is immediate and will be repeatedly used in the proofs of the main results.

\begin{lem}\label{l:compseries}
Let $A$ be a $\mathbb K$-algebra, $V$ an $A$-module, and $0\subseteq V_1\subseteq V_2\subseteq\cdots\subseteq V_m=V$ a composition series for $V$. Then $0\subseteq V_1^\mathbb F\subseteq V_2^\mathbb F\subseteq\cdots\subseteq V_m^\mathbb F=V^\mathbb F$ is a sequence of inclusions of $A^\mathbb F$-modules and $V_j^\mathbb F/V_{j-1}^\mathbb F\cong (V_j/V_{j-1})^\mathbb F$.\hfill\qedsymbol
\end{lem}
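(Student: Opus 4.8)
The plan is to reduce everything to the exactness of extension of scalars. For each $j$, consider the short exact sequence of $A$-modules
\[
0 \longrightarrow V_{j-1} \longrightarrow V_j \longrightarrow V_j/V_{j-1} \longrightarrow 0,
\]
in which the first map is the inclusion and the second the canonical projection. Since $\mathbb F$ is a $\mathbb K$-vector space it is free, hence flat, over $\mathbb K$; therefore applying $\mathbb F\otimes_{\mathbb K}-$ produces an exact sequence of $\mathbb K$-vector spaces
\[
0 \longrightarrow V_{j-1}^{\mathbb F} \longrightarrow V_j^{\mathbb F} \longrightarrow (V_j/V_{j-1})^{\mathbb F} \longrightarrow 0 .
\]

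Next I would observe that $V\mapsto V^{\mathbb F}$ is a functor from $A$-modules to $A^{\mathbb F}$-modules: it sends an $A$-module homomorphism $f\colon V\to W$ to $\mathrm{id}_{\mathbb F}\otimes f$, which is $A^{\mathbb F}$-linear because $A^{\mathbb F}=\mathbb F\otimes_{\mathbb K}A$ acts through the respective tensor factors. In particular the two maps in the second displayed sequence are homomorphisms of $A^{\mathbb F}$-modules, and the first of them is induced by the inclusion $V_{j-1}\subseteq V_j$, so under the identification of $V_{j-1}^{\mathbb F}$ with its image in $V_j^{\mathbb F}$ it is again an inclusion. Letting $j$ range from $1$ to $m$ yields the chain $0\subseteq V_1^{\mathbb F}\subseteq\cdots\subseteq V_m^{\mathbb F}$ of $A^{\mathbb F}$-submodules, while exactness at the right-hand term of the sequence gives $V_j^{\mathbb F}/V_{j-1}^{\mathbb F}\cong (V_j/V_{j-1})^{\mathbb F}$ as $A^{\mathbb F}$-modules. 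Finally $V_m^{\mathbb F}=V^{\mathbb F}$ is immediate from $V_m=V$.

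There is no genuine obstacle here; the only points needing (minimal) care are the flatness of $\mathbb F$ over $\mathbb K$ and the compatibility of the induced quotient isomorphism with the $A^{\mathbb F}$-action, both of which are formal. It is worth emphasizing what the lemma does \emph{not} claim: the resulting chain need not be a composition series of $V^{\mathbb F}$, since the simple $A$-modules $V_j/V_{j-1}$ may fail to stay simple after extension of scalars to $\mathbb F$ --- exactly the phenomenon the remainder of the paper is devoted to analyzing.
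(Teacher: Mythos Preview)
Your proof is correct. The paper does not actually supply a proof of this lemma: it merely declares it ``immediate'' and places a \qedsymbol, so your flatness/functoriality argument is exactly the routine verification the authors have in mind and simply omit.
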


\subsection{}  Given two $\mathbb K$-vector spaces $V$ and $W$, let $\varphi^\mathbb F_{V,W}:V^\mathbb F\otimes_\mathbb F W^\mathbb F\to (V\otimes_\mathbb K W)^\mathbb F$ be the isomorphism of $\mathbb F$-vector spaces determined by  $(a\otimes v)\otimes(b\otimes w)\mapsto (ab)\otimes(v\otimes w)$.

Recall that if $A$ is a Hopf algebra the tensor product of $A$-modules can be equipped with a structure of $A$-module by using the comultiplication. The Hopf algebra structure of $A$ can be extended to one on $A^\mathbb F$. In particular, the comultiplication is given by $\Delta^\mathbb F(b\otimes x) = (\varphi^\mathbb F_{A,A})^{-1}(b\otimes\Delta(x))$, where $\Delta$ is the comultiplication on $A$.
One now easily checks the following lemma which will be used in the proof of Theorem \ref{t:cg} below.

\begin{lem}\label{l:tpfe}
Let $A$ be a Hopf algebra over $\mathbb K$. If $V,W$ are $A$-modules, $\varphi^\mathbb F_{V,W}$ is an isomorphism of $A^\mathbb F$-modules.\hfill\qedsymbol
\end{lem}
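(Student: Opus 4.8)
The plan is to exploit the fact, recorded already in the definition of $\varphi^\mathbb F_{V,W}$, that this map is an isomorphism of $\mathbb F$-vector spaces; hence only $A^\mathbb F$-equivariance remains to be checked. Both $A^\mathbb F$-actions involved are $\mathbb F$-linear, $A^\mathbb F=\mathbb F\otimes_\mathbb K A$ is spanned over $\mathbb F$ by the elements $1\otimes x$ with $x\in A$, and the spaces $V^\mathbb F\otimes_\mathbb F W^\mathbb F$ and $(V\otimes_\mathbb K W)^\mathbb F$ are spanned over $\mathbb F$ by the decomposable tensors $(1\otimes v)\otimes(1\otimes w)$ and $1\otimes(v\otimes w)$, respectively. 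Therefore it suffices to prove the identity
\begin{equation*}
\varphi^\mathbb F_{V,W}\bigl((1\otimes x)\cdot((1\otimes v)\otimes(1\otimes w))\bigr) \;=\; (1\otimes x)\cdot\varphi^\mathbb F_{V,W}\bigl((1\otimes v)\otimes(1\otimes w)\bigr)
\end{equation*}
for all $x\in A$, $v\in V$, $w\in W$.

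For the computation, write $\Delta(x)=\sum x_{(1)}\otimes x_{(2)}$ in Sweedler notation. The formula for $\Delta^\mathbb F$ recalled above gives $\Delta^\mathbb F(1\otimes x)=(\varphi^\mathbb F_{A,A})^{-1}(1\otimes\Delta(x))=\sum(1\otimes x_{(1)})\otimes(1\otimes x_{(2)})$, so that, by the definition of the $A^\mathbb F$-module structure on a tensor product together with the description of the $A^\mathbb F$-actions on $V^\mathbb F$ and $W^\mathbb F$, the left-hand side above equals $\varphi^\mathbb F_{V,W}\bigl(\sum(1\otimes x_{(1)}v)\otimes(1\otimes x_{(2)}w)\bigr)=\sum 1\otimes(x_{(1)}v\otimes x_{(2)}w)=1\otimes\bigl(\Delta(x)(v\otimes w)\bigr)$. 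On the other hand, $\varphi^\mathbb F_{V,W}((1\otimes v)\otimes(1\otimes w))=1\otimes(v\otimes w)$, and the $A^\mathbb F$-module structure on $(V\otimes_\mathbb K W)^\mathbb F$ is by construction induced from the $A$-module structure on $V\otimes_\mathbb K W$; hence $(1\otimes x)\cdot\bigl(1\otimes(v\otimes w)\bigr)=1\otimes\bigl(x\cdot(v\otimes w)\bigr)=1\otimes\bigl(\Delta(x)(v\otimes w)\bigr)$. The two sides agree, and the lemma follows.

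Since the whole argument is a bookkeeping exercise, I do not expect a genuine obstacle; the only point deserving a little attention is the legitimacy of reducing to the spanning elements $1\otimes x$ and the decomposable tensors above, which rests on the $\mathbb F$-linearity of $\varphi^\mathbb F_{V,W}$ and on the appropriate $\mathbb F$-(bi)linearity of the two $A^\mathbb F$-actions, both immediate from the definitions. If one prefers to avoid Sweedler notation, the identical computation goes through verbatim with a fixed presentation $\Delta(x)=\sum_i a_i\otimes b_i$.
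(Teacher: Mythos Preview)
Your proof is correct and is precisely the routine verification the paper leaves implicit: the lemma is stated with a \qedsymbol\ and no argument beyond the preceding sentence ``One now easily checks the following lemma.'' Your reduction to spanning elements and the Sweedler-notation computation are exactly the expected check.
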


\subsection{} Let $I$ be the set of vertices of a finite-type connected Dynkin
diagram and let $\lie g$ be the corresponding simple Lie algebra over $\mathbb C$ with a fixed Cartan subalgebra $\lie h$.
Fix a set of positive roots $R^+$ and let
$$\lie n^\pm = \opl_{\alpha\in R^+}^{} \lie g_{\pm\alpha} \quad\text{where}\quad \lie g_{\pm\alpha} = \{x\in\lie g: [h,x]=\pm\alpha(h)x, \ \forall \ h\in\lie h\}.$$
Fix also a Chevalley basis $\{x_\alpha^\pm, h_i: \alpha\in R^+, i\in I\}$ for $\lie g$ so that $\lie n^\pm = \mathbb Cx^\pm_\alpha$ and $[x_{\alpha_i}^+,x_{\alpha_i}^-]=h_i$. Here, $\alpha_i \ (i\in I$) denote the simple roots. The fundamental weights are denoted by $\omega_i$ ($i\in I$), while $Q,P,Q^+$, and $P^+$ denote the root and weight lattices with the corresponding positive cones, respectively. We equip $\lie h^*$ with the partial order given by $\lambda\le \mu$ iff $\mu-\lambda\in Q^+$.

Let $\tlie g=\lie g\otimes\mathbb C[t,t^{-1}]=\tlie n^-\oplus\tlie h\oplus\tlie n^+$ be the loop algebra over $\lie g$ with bracket given by $[x\otimes f(t),y\otimes g(t)]=[x,y]\otimes(f(t)g(t))$. Given $\alpha\in R^+,i\in I$, and $r\in\mathbb Z$, let also $x_{\alpha,r}^\pm=x_\alpha^\pm\otimes t^r, h_{i,r}=h_i\otimes t^r$. Clearly, these elements form a basis of $\tlie g$. We identify $\lie g$ with the subalgebra of $\tlie g$ given by the subset $\lie g\otimes 1$ and identify the elements $x_{\alpha,0}^\pm$ and $h_{i,0}$ with the elements $x_{\alpha}^\pm$ and $h_{i}$, respectively.

\subsection{}
For a Lie algebra $\lie a$, let $U(\lie a)$ be its universal enveloping algebra. Consider the $\mathbb Z$-subalgebra $U(\tlie g)_\mathbb Z$ (resp. $U(\lie g)_\mathbb Z$) of $U(\tlie g)$ generated by the elements $(x_{\alpha,r}^\pm)^{(k)}:=\frac{(x_{\alpha,r}^\pm)^k}{k!}$ (resp. $(x_{\alpha}^\pm)^{(k)}:=\frac{(x_{\alpha}^\pm)^k}{k!}$) for all $\alpha\in R^+, r,k\in\mathbb Z, k\ge 0$. $U(\lie g)_\mathbb Z$ is Kostant's integral form of $U(\lie g)$ \cite{kos} and $U(\tlie g)_\mathbb Z$ is Garland's integral form of $U(\tlie g)$ \cite{garala}.
Set $U(\tlie n^\pm)_\mathbb Z=U(\tlie n^\pm)\cap U(\tlie g)_\mathbb Z$ and define $U(\tlie h)_\mathbb Z, U(\lie n^\pm)_\mathbb Z$, and $U(\lie h)_\mathbb Z$ similarly. We have $U(\lie g)_\mathbb Z=U(\lie n^-)_\mathbb ZU(\lie h)_\mathbb ZU(\lie n^+)_\mathbb Z$ and $U(\tlie g)_\mathbb Z=U(\tlie n^-)_\mathbb ZU(\tlie h)_\mathbb ZU(\tlie n^+)_\mathbb Z$.  Moreover, given an ordering on $R^+\times \mathbb Z$, the monomials in the elements $(x_{\alpha,r}^\pm)^{(k)}$ obtained in the obvious way from the corresponding PBW monomials associated to our fixed bases for $\lie n^\pm$ and $\tlie n^\pm$ form bases for $U(\lie n^\pm)_\mathbb Z$ and $U(\tlie n^\pm)_\mathbb Z$, respectively. The set $\{\prod_{i\in I}\binom{h_i}{k_i}: k_i\in\mathbb Z_+\}$ is a basis for $U(\lie h)_\mathbb Z$ where $\binom{h_i}{k} = \frac{1}{k!}h_i(h_i-1)\cdots(h_i-k+1)$. As for $U(\tlie h)_\mathbb Z$, define elements $\Lambda_{i,r}\in U(\tlie h)$ by the following equality of formal power series in the variable $u$:
\begin{equation*}\label{e:Lambdadef}
\sum_{r=0}^\infty \Lambda_{i,\pm r}\ u^r = \exp\left( - \sum_{s=1}^\infty \frac{h_{i,\pm s}}{s}\ u^s\right).
\end{equation*}
It turns out that $\Lambda_{i,r}\in U(\tlie h)_\mathbb Z$ and that the unitary $\mathbb Z$-subalgebras generated by the sets $\gb\Lambda^\pm = \{\Lambda_{i,\pm r}: i\in I,r\in\mathbb N\}$ are the polynomial algebras $\mathbb Z[\gb\Lambda^\pm]$ (cf. \cite[Proposition 1.7]{jmnacf}). Moreover, multiplication defines an isomorphism of $\mathbb Z$-algebras
$$U(\tlie h)_\mathbb Z \cong \mathbb Z[\gb\Lambda^-]\otimes_\mathbb ZU(\lie h)_\mathbb Z\otimes_\mathbb Z\mathbb Z[\gb\Lambda^+].$$

Given a field $\mathbb L$, let $U(\lie g)_\mathbb L$ and $U(\tlie g)_\mathbb L$ be the hyperalgebra and the hyper loop algebra of $\lie g$ over $\mathbb L$ which are defined by
$$U(\lie g)_\mathbb L =  \mathbb L\otimes_{\mathbb Z}U(\lie g)_\mathbb Z \quad\text{and}\quad U(\tlie g)_\mathbb L = \mathbb L\otimes_{\mathbb Z}U(\tlie g)_\mathbb Z.$$
Define also $U(\tlie n^\pm)_\mathbb L$, etc., in the obvious way.

From now on, we set $\gb\Lambda=\gb\Lambda^+$ and let $\mathbb L[\gb\Lambda]=\mathbb L\otimes_\mathbb Z\mathbb Z[\gb\Lambda]\subseteq U(\tlie g)_\mathbb L$. We keep the notation $(x^\pm_{\alpha,r})^{(k)}, \binom{h_i}{k}$, and $\Lambda_{i,r}$ for the images of these elements in $U(\tlie g)_\mathbb L$.

The Hopf algebra structure on $U(\tlie g)$ induces a Hopf algebra structure on $U(\tlie g)_\mathbb L$. We denote the corresponding augmentation ideal by $U(\tlie g)_\mathbb L^0$ (and similarly for the factors of the triangular decompositions above). $U(\lie g)_\mathbb L$ is a Hopf subalgebra of $U(\tlie g)_\mathbb L$.

\begin{rem}
If the characteristic of $\mathbb L$ is zero, the algebra $U(\tlie g)_\mathbb L$ is naturally isomorphic to the enveloping algebra $U(\tlie g_\mathbb L)$ where $\tlie g_\mathbb L = \mathbb L\otimes_\mathbb Z\tlie g_\mathbb Z$ and $\tlie g_\mathbb Z=\tlie g\cap U(\tlie g)_\mathbb Z$ (and similarly for $\lie g$ in place of $\tlie g$).
\end{rem}

For further details regarding this subsection see \cite[\S 1]{jmhla} and \cite[\S 1.3]{jmnacf}.

\subsection{}\label{ss:l-lattices} For a field $\mathbb L$, let $\cal P_\mathbb L^+$ be the monoid of $I$-tuples of polynomials $\gb\omega=(\gb\omega_i(u))$ such that $\gb\omega_i(0)=1$ for all $i\in I$ and let $\cal P_\mathbb L$ be the corresponding abelian group of rational functions. Let also $\gb M_\mathbb L$ be the set of unitary $\mathbb L$-algebra homomorphisms $\gb\varpi:\mathbb L[\gb\Lambda]\to\mathbb L$. Notice that $\gb M_\mathbb L$ can be identified with a submonoid of the multiplicative monoid of $I$-tuples of formal power series by the assignment $\gb\varpi\mapsto (\gb\varpi_i(u))$ where $\gb\varpi_i(u)=1+\sum_{r\in\mathbb N}\varpi_{i,r}u^r$ and $\varpi_{i,r}=\gb\varpi(\Lambda_{i,r})$. If $\mathbb L$ is algebraically closed,  we can identify $\cal P_\mathbb L$ with a subgroup of $\gb M_\mathbb L$ by expanding the denominators of the elements of $\cal P_\mathbb L$ into the corresponding products of geometric power series.

Given $\mu\in P$ and $a\in \mathbb L^\times$, let $\gb\omega_{\mu,a}$ be the element of $\cal P_\mathbb L$ whose $i$-th coordinate rational function is $(\gb\omega_{\mu,a})_i(u) = (1-au)^{\mu(h_i)}$. Let $\cal Q_\mathbb L^+$ be the sub-monoid of $\cal P_\mathbb L$ generated by the elements $\gb\omega_{\alpha,a}$ with $\alpha\in R^+$ and $a\in\mathbb L^\times$ and define a partial order on $\cal P_\mathbb L$ by setting $\gb\varpi\le\gb\omega$ iff $\gb\omega\in\gb\varpi\cal Q_\mathbb L^+$.

\subsection{} Let $\gb\varpi\in\gb M_\mathbb F$.  Define  $\cal F(\gb\varpi)$ to be an $\mathbb F[\gb\Lambda]$-module isomorphic to the quotient of $\mathbb F[\gb\Lambda]$ by the maximal ideal generated by the elements $\Lambda_{i,r}-\varpi_{i,r}$. In particular, $\cal F(\gb\varpi)$ is a one-dimensional $\mathbb F$-vector space. Given a nonzero vector $v\in\cal F(\gb\varpi)$, let $\cal K(\gb\varpi)=\mathbb K[\gb\Lambda]v$ and $\deg(\gb\varpi)=\dim_\mathbb K(\cal K(\gb\varpi))$. It is easy to see that the isomorphism class of $\cal K(\gb\varpi)$ as a $\mathbb K[\gb\Lambda]$-module does not depend on the choice of $v$. Finally, given $g\in{\rm Aut}(\mathbb F/\mathbb K)$, let $g(\gb\varpi)$ be the $I$-tuple of power series whose $i$-th entry is $1+\sum_{r\in\mathbb N} g(\varpi_{i,r})u^r$ and set $[\gb\varpi] = \{g(\gb\varpi): g\in {\rm Aut}(\mathbb F/\mathbb K)\}$. The set $[\gb\varpi]$ is called the conjugacy class of $\gb\varpi$ over $\mathbb K$. Let $\gb M_{\mathbb F,\mathbb K}, \cal P_{\mathbb F,\mathbb K}$, and $\cal P_{\mathbb F,\mathbb K}^+$ be the corresponding sets of conjugacy classes in $\gb M_{\mathbb F}, \cal P_{\mathbb F}$, and $\cal P_{\mathbb F}^+$, respectively . We have (see \cite[Theorem 1.2]{jmnacf}):

{\samepage
\begin{thm}\label{t:irpa}\hfill
\begin{enumerate}
\item $\cal K(\gb\varpi)$ is an irreducible $\mathbb K[\gb\Lambda]$-module for every $\gb\varpi\in\gb M_\mathbb F$ and every finite-dimensional irreducible $\mathbb K[\gb\Lambda]$-module is of this form for some $\gb\varpi\in\gb M_\mathbb F$.
\item $\cal K(\gb\varpi)\cong\cal K(\gb\varpi')$ iff $\gb\varpi'\in[\gb\varpi]$.
\item Suppose $\deg(\gb\varpi)<\infty$. Then $\cal K(\gb\varpi)^\mathbb F\cong \opl_{\gb\varpi'\in[\gb\varpi]}^{} V_{\gb\varpi'}$ with $V_{\gb\varpi'}$  an indecomposable self-extension of  $\cal F(\gb\omega')$ of length $\deg(\gb\omega)/|[\gb\omega]|$, where $|[\gb\omega]|$ is the cardinality of $[\gb\omega]$.\hfill\qedsymbol
\end{enumerate}
\end{thm}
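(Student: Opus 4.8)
The plan is to realize each $\cal K(\gb\varpi)$ as a quotient $\mathbb K[\gb\Lambda]/\mathfrak m$ by a maximal ideal, and then to track such quotients under base change to $\mathbb F$ and under the action of $\mathrm{Aut}(\mathbb F/\mathbb K)$. Fix $\gb\varpi\in\gb M_\mathbb F$ and a nonzero $v\in\cal F(\gb\varpi)$. The map $\mathbb K[\gb\Lambda]\to\cal K(\gb\varpi)$ sending $f\mapsto fv=\gb\varpi(f)v$ is surjective with kernel $\mathfrak m_{\gb\varpi}:=\ker(\gb\varpi|_{\mathbb K[\gb\Lambda]})$, so $\cal K(\gb\varpi)\cong\mathbb K[\gb\Lambda]/\mathfrak m_{\gb\varpi}$, and the latter is isomorphic to the $\mathbb K$-subalgebra $\mathbb K[\varpi]\subseteq\mathbb F$ generated by the $\varpi_{i,r}$. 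Since $\mathbb F$ is algebraic over $\mathbb K$, the domain $\mathbb K[\varpi]$ is integral over the field $\mathbb K$, hence is itself a field; thus $\mathfrak m_{\gb\varpi}$ is maximal and $\cal K(\gb\varpi)$ is irreducible, with no finiteness of $\deg(\gb\varpi)$ needed. Conversely, a finite-dimensional irreducible $\mathbb K[\gb\Lambda]$-module is cyclic, hence of the form $\mathbb K[\gb\Lambda]/\mathfrak m$ with $\mathbb E:=\mathbb K[\gb\Lambda]/\mathfrak m$ a finite field extension of $\mathbb K$; fixing a $\mathbb K$-embedding $\sigma\colon\mathbb E\hookrightarrow\mathbb F$ and setting $\varpi_{i,r}=\sigma(\overline{\Lambda_{i,r}})$ produces $\gb\varpi\in\gb M_\mathbb F$ with $\mathfrak m_{\gb\varpi}=\mathfrak m$, which proves (a).

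For (b), recall that over a commutative ring $R$ one has $R/\mathfrak a\cong R/\mathfrak b$ as $R$-modules iff $\mathfrak a=\mathfrak b$ (compare annihilators), so $\cal K(\gb\varpi)\cong\cal K(\gb\varpi')$ iff $\mathfrak m_{\gb\varpi}=\mathfrak m_{\gb\varpi'}$. If $\gb\varpi'=g(\gb\varpi)$ with $g\in\mathrm{Aut}(\mathbb F/\mathbb K)$, then $\gb\varpi'(f)=g(\gb\varpi(f))$ for all $f\in\mathbb K[\gb\Lambda]$ since $g$ fixes $\mathbb K$, so the two kernels agree. Conversely, if $\mathfrak m_{\gb\varpi}=\mathfrak m_{\gb\varpi'}=:\mathfrak m$, then $\gb\varpi$ and $\gb\varpi'$ factor through $\mathbb K$-embeddings $\sigma,\sigma'$ of $\mathbb E=\mathbb K[\gb\Lambda]/\mathfrak m$ into $\mathbb F$, and by the isomorphism extension theorem (valid because $\mathbb F=\overline{\mathbb K}$ is algebraic over $\mathbb K$) there is $g\in\mathrm{Aut}(\mathbb F/\mathbb K)$ with $g\circ\sigma=\sigma'$, whence $\gb\varpi'=g(\gb\varpi)$. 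The same argument shows that when $\mathbb E/\mathbb K$ is finite, $|[\gb\varpi]|$ equals the number of $\mathbb K$-embeddings $\mathbb E\hookrightarrow\mathbb F$, i.e.\ the separable degree $[\mathbb E:\mathbb K]_s$.

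For (c), assume $\deg(\gb\varpi)=n<\infty$, so $\mathbb E:=\cal K(\gb\varpi)$ is a field with $[\mathbb E:\mathbb K]=n$. Tensoring the isomorphism $\cal K(\gb\varpi)\cong\mathbb K[\gb\Lambda]/\mathfrak m_{\gb\varpi}$ with $\mathbb F$ identifies $\cal K(\gb\varpi)^\mathbb F$ with $\mathbb F\otimes_\mathbb K\mathbb E$ as a module over $\mathbb F[\gb\Lambda]=\mathbb K[\gb\Lambda]^\mathbb F$, the action factoring through the surjection $\mathbb F[\gb\Lambda]\to\mathbb F\otimes_\mathbb K\mathbb E$. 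This is a finite-dimensional $\mathbb F$-algebra, hence a finite product of local Artinian $\mathbb F$-algebras $R_\tau$ whose maximal ideals are precisely the kernels of the maps $a\otimes e\mapsto a\tau(e)$ attached to the $\mathbb K$-embeddings $\tau\colon\mathbb E\hookrightarrow\mathbb F$, each residue field being $\mathbb F$; so the number of factors is $|[\gb\varpi]|$. Writing $\gb\varpi'\in\gb M_\mathbb F$ for the element attached to $\tau$ (namely $\varpi'_{i,r}=\tau(\overline{\Lambda_{i,r}})$), one checks $R_\tau/\mathrm{rad}(R_\tau)\cong\cal F(\gb\varpi')$ as $\mathbb F[\gb\Lambda]$-modules, so all composition factors of $R_\tau$ are $\cong\cal F(\gb\varpi')$ and its length is $\dim_\mathbb F R_\tau$; moreover $R_\tau$ is indecomposable as an $\mathbb F[\gb\Lambda]$-module, a local ring being indecomposable as a module over any ring mapping onto it. Finally $\mathrm{Aut}(\mathbb F/\mathbb K)$ acts on $\mathbb F\otimes_\mathbb K\mathbb E$ through the first tensor factor and permutes the $R_\tau$ transitively (again by the extension theorem of step (b)), so all $R_\tau$ are mutually isomorphic and $\dim_\mathbb F R_\tau=n/|[\gb\varpi]|=\deg(\gb\varpi)/|[\gb\varpi]|$; setting $V_{\gb\varpi'}=R_\tau$ completes the proof.

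The one point requiring genuine care is the structure of $\mathbb F\otimes_\mathbb K\mathbb E$ in part (c) when $\mathbb E/\mathbb K$ is inseparable, where the factors $R_\tau$ are nontrivial local rings rather than copies of $\mathbb F$: one must confirm both that each $R_\tau$ is indecomposable as an $\mathbb F[\gb\Lambda]$-module and that the $R_\tau$ share a common $\mathbb F$-dimension, and it is the transitivity of the $\mathrm{Aut}(\mathbb F/\mathbb K)$-action on the set of embeddings $\mathbb E\hookrightarrow\mathbb F$ that makes the length formula $\deg(\gb\varpi)/|[\gb\varpi]|$ uniform across the summands.
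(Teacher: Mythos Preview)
The paper does not actually supply its own proof of this theorem: it is quoted verbatim from \cite[Theorem 1.2]{jmnacf} and closed with a \qedsymbol. Your argument is correct and is essentially the standard commutative-algebra route one expects (and, to the extent one can infer from the surrounding text and the way \cite{jmnacf} is used elsewhere, the one taken there): identify $\cal K(\gb\varpi)$ with $\mathbb K[\gb\Lambda]/\mathfrak m_{\gb\varpi}\cong\mathbb K[\varpi]\subseteq\mathbb F$, classify maximal ideals via $\mathbb K$-embeddings into $\mathbb F$, and analyze $\mathbb F\otimes_\mathbb K\mathbb E$ as a product of local Artinian $\mathbb F$-algebras indexed by those embeddings.

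One small point of phrasing in part (c): when you write that the $R_\tau$ are ``mutually isomorphic'' under the $\mathrm{Aut}(\mathbb F/\mathbb K)$-action, note that this action is only $\mathbb K$-linear ($g$-semilinear over $\mathbb F$), so the $R_\tau$ need not be isomorphic as $\mathbb F[\gb\Lambda]$-modules; what you actually use, and what is true, is that a semilinear bijection preserves $\mathbb F$-dimension, which is all that is needed for the length formula $\deg(\gb\varpi)/|[\gb\varpi]|$. With that caveat your proof is complete.
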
}

The number $\deg(\gb\omega)/|[\gb\omega]|$ is called the inseparability degree of $\gb\omega$ over $\mathbb K$ and will be denoted by $\ideg(\gb\omega)$.

\subsection{}\label{ss:fdg} The basic facts about the finite-dimensional representation theory of \linebreak $U(\lie g)_\mathbb K$ are as follows (see \cite[\S 1.4]{jmnacf} and references therein).

\begin{thm}\label{t:cig}
Let $V$ be a finite-dimensional $U(\lie g)_\mathbb K$-module. Then:
\begin{enumerate}
\item $V=\opl_{\mu\in P}^{} V_{\mu}$, where $V_{\mu} = \{v\in v:\binom{h_i}{k}v=\binom{\mu(h_i)}{k}v\}$.
\item If $V$ is irreducible, there exists $\lambda\in P^+$ such that
\begin{equation}\label{e:hwrel}
\dim_\mathbb K(V_\lambda)=1, \qquad U(\lie n^+)_\mathbb K^0V_{\lambda}=0, \quad\text{and}\quad V=U(\lie n^-)_\mathbb KV_\lambda.
\end{equation}
\item For every $\lambda\in P^+$ there exists a finite-dimensional irreducible $U(\lie g)_\mathbb K$-module, denoted by $V_\mathbb K(\lambda)$, which satisfies \eqref{e:hwrel}.
\item If $V$ satisfies \eqref{e:hwrel} for some $\lambda\in P^+$ and $V_\mu\ne 0$, then $\mu\le \lambda$.
\item The $U(\lie g)_\mathbb F$-module $V_\mathbb F(\lambda),\lambda\in P^+$, is isomorphic to $(V_\mathbb K(\lambda))^\mathbb F$.
\hfill\qedsymbol
\end{enumerate}
\end{thm}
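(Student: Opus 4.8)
The plan is to derive (1)--(4) from the corresponding facts over $\mathbb C$ (equivalently, from the representation theory of the algebraic group) by the standard reduction arguments, and to prove the genuinely new statement (5) by a density argument. For (1) I would restrict $V$ to each rank-one subalgebra $U(\lie g^{(i)})_\mathbb K\subseteq U(\lie g)_\mathbb K$ generated by the divided powers $(x^\pm_{\alpha_i})^{(k)}$; this is the hyperalgebra of $\lie{sl}_2$. The input I need is the $\lie{sl}_2$-fact that every finite-dimensional $U(\lie{sl}_2)_\mathbb K$-module decomposes as $\bigoplus_{m\in\mathbb Z}V_m$ with $\binom hk$ acting on $V_m$ by $\binom mk$, and with $m\ge 0$ on every vector annihilated by all $(x^+)^{(k)}$, $k\ge 1$. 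Performing this simultaneously over all $i\in I$ and identifying $P$ with $\mathbb Z^I$ via $\mu\mapsto(\mu(h_i))_{i\in I}$ yields (1) and supplies the positivity needed below.

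For (2) and (4): a finite-dimensional $V$ has only finitely many weights, so I would pick $\lambda$ maximal for $\le$ among the weights of $V$ and $0\ne v\in V_\lambda$. Since $U(\lie n^+)_\mathbb K^0$ raises weights, $U(\lie n^+)_\mathbb K^0v=0$; the triangular decomposition $U(\lie g)_\mathbb K=U(\lie n^-)_\mathbb KU(\lie h)_\mathbb KU(\lie n^+)_\mathbb K$ then gives $U(\lie g)_\mathbb Kv=U(\lie n^-)_\mathbb Kv$, all of whose weights lie in $\lambda-Q^+$; this is (4). When $V$ is irreducible, $V=U(\lie n^-)_\mathbb Kv$, so $V_\lambda=\mathbb Kv$ is one-dimensional, and applying the rank-one step to each $U(\lie g^{(i)})_\mathbb K$ forces $\lambda(h_i)\ge 0$, i.e.\ $\lambda\in P^+$. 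The same construction (the modules being generated by their highest weight lines, one maps between them either way) shows that any two irreducibles satisfying \eqref{e:hwrel} for a fixed $\lambda$ are isomorphic, a uniqueness I will invoke in the proof of (5).

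For (3), I would take the complex finite-dimensional irreducible of highest weight $\lambda$ with highest weight vector $v_\lambda$, form the admissible lattice $V_\mathbb Z(\lambda)=U(\lie g)_\mathbb Z v_\lambda$ (finitely generated free over $\mathbb Z$, stable under $U(\lie g)_\mathbb Z$), and set $W=\mathbb K\otimes_\mathbb Z V_\mathbb Z(\lambda)$. Then $W$ is finite-dimensional, equals $U(\lie n^-)_\mathbb K(v_\lambda\otimes 1)$, satisfies $U(\lie n^+)_\mathbb K^0(v_\lambda\otimes 1)=0$, and $\dim_\mathbb K W_\lambda=1$; hence the sum $N$ of all submodules with zero $\lambda$-weight space is the unique maximal proper submodule, and $V_\mathbb K(\lambda):=W/N$ is irreducible and satisfies \eqref{e:hwrel}. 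For (5), from $\dim_\mathbb K V_\mathbb K(\lambda)_\lambda=1$ and $V_\mathbb K(\lambda)=U(\lie n^-)_\mathbb K V_\mathbb K(\lambda)_\lambda$ one reads off $\mathrm{End}_{U(\lie g)_\mathbb K}(V_\mathbb K(\lambda))=\mathbb K$ (an endomorphism preserves the one-dimensional $V_\lambda$, acts there by some $c\in\mathbb K$, hence equals $c$ on the generated module). By the Jacobson density theorem and finite-dimensionality, the structure map $U(\lie g)_\mathbb K\to\mathrm{End}_\mathbb K(V_\mathbb K(\lambda))$ is surjective; applying the exact functor $\mathbb F\otimes_\mathbb K-$ together with $\mathrm{End}_\mathbb K(V)^\mathbb F=\mathrm{End}_\mathbb F(V^\mathbb F)$ makes $U(\lie g)_\mathbb F\to\mathrm{End}_\mathbb F((V_\mathbb K(\lambda))^\mathbb F)$ surjective, so $(V_\mathbb K(\lambda))^\mathbb F$ is an irreducible $U(\lie g)_\mathbb F$-module. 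By flatness (cf.\ Lemma \ref{l:compseries}) it still has a one-dimensional $\lambda$-weight space, is generated over $U(\lie n^-)_\mathbb F$ by $V_\lambda\otimes 1$, and has $U(\lie n^+)_\mathbb F^0(V_\lambda\otimes 1)=0$; thus it satisfies \eqref{e:hwrel} for $\lambda$ over $\mathbb F$, and the uniqueness noted above identifies it with $V_\mathbb F(\lambda)$.

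I expect the main obstacle to be the rank-one input in the first paragraph: the semisimple action of each $h_i$ with integer eigenvalues, together with the matching of $\binom{h_i}{k}$ with $\binom mk$. In positive characteristic this is the non-trivial part of the structure theory of $U(\lie{sl}_2)_\mathbb K$ and requires genuine work with the commutation relations among the divided powers (or with the Frobenius-twist decomposition of $U(\lie{sl}_2)_\mathbb K$). Everything else is bookkeeping with the triangular decomposition and flat base change; the only subtle point in the argument for (5) is that forming $\mathrm{End}$ commutes with extension of scalars for finite-dimensional modules, which holds by flatness.
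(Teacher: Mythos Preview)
The paper does not prove this theorem; it is quoted as background from \cite[\S 1.4]{jmnacf} and references therein, and marked with a \qedsymbol. Your sketch is correct and follows the standard route: parts (1)--(4) are the usual highest-weight arguments over a hyperalgebra, with the rank-one weight decomposition for $U(\lie{sl}_2)_\mathbb K$ as the only genuine input (which you correctly flag), and your proof of (5) via $\mathrm{End}_{U(\lie g)_\mathbb K}(V_\mathbb K(\lambda))=\mathbb K$ together with Jacobson density is a clean way to establish absolute irreducibility of $V_\mathbb K(\lambda)$, after which the identification with $V_\mathbb F(\lambda)$ follows from the highest-weight data and the uniqueness you note.
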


We also recall the following basic facts on characters. Let $\cal C(\lie g)_\mathbb K$ be the category of finite-dimensional $U(\lie g)_\mathbb K$-modules. If $V$ is an object in $\cal C(\lie g)_\mathbb K$, the character of $V$ is the function $\ch(V):P\to\mathbb Z_+$, $\mu\mapsto \ch(V)_\mu:=\dim(V_\mu)$.  Observe that the last part of the above theorem implies that $\ch(V_\mathbb K(\lambda)) = \ch(V_\mathbb L(\lambda))$ for every $\lambda\in P^+$ and every field $\mathbb L$ having the same characteristic as $\mathbb K$.

It is convenient to also regard $\ch(V)$ as an element of the integral group ring $\mathbb Z[P]$ of $P$. It follows that there exists a ring homomorphism $\ch:{\rm Gr}(\cal C(\lie g)_\mathbb K) \to \mathbb Z[P]$ where ${\rm Gr}(\cal C(\lie g)_\mathbb K)$ is the Grothendieck ring of $\cal C(\lie g)_\mathbb K$. In particular, if $V$ and $W$ are objects in $\cal C(\lie g)_\mathbb K$ and $M$ is an extension of $V$ by $W$ we have
\begin{equation}\label{e:char}
\ch(M)=\ch(V)+\ch(W)\quad\text{and}\quad \ch(V\otimes W)=\ch(V)\ \ch(W).
\end{equation}
Using the first equality above, one easily devises an algorithm that proves the following corollary of Theorem \ref{t:cig}.

\begin{cor}\label{c:char}
Let $V$ be an object of $\cal C(\lie g)_\mathbb K$ and $\lambda\in P^+$. The Jordan-H\"older multiplicity $\mult^{\cal C(\lie g)_\mathbb K}_{V_\mathbb K(\lambda)}(V)$ is completely determined by $\ch(V)$.\hfill\qedsymbol
\end{cor}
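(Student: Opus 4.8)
The plan is to run a standard descending induction on the highest weight, exactly as one does over $\mathbb{C}$, using the fact that characters are additive on short exact sequences. First I would observe that by Theorem \ref{t:cig}(i) every object $V$ of $\cal C(\lie g)_\mathbb K$ has a well-defined character $\ch(V)\in\mathbb Z_+[P]$, and that $\ch(V)$ has finite support; let $\lambda$ be a maximal element (with respect to $\le$) of the support. I claim that the coefficient $\ch(V)_\lambda=\dim_\mathbb K(V_\lambda)$ equals $\mult^{\cal C(\lie g)_\mathbb K}_{V_\mathbb K(\lambda)}(V)$: indeed, in any composition series of $V$ each factor is some $V_\mathbb K(\mu)$ by Theorem \ref{t:cig}(ii)--(iii), and by Theorem \ref{t:cig}(iv) such a factor has a nonzero $\lambda$-weight space only if $\lambda\le\mu$; since $\lambda$ is maximal in the support of $\ch(V)$ and the characters add up (the first equality in \eqref{e:char}), the only factors contributing to the $\lambda$-weight space are the copies of $V_\mathbb K(\lambda)$ itself, each contributing $\dim_\mathbb K(V_\mathbb K(\lambda)_\lambda)=1$ by \eqref{e:hwrel}. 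Hence $\mult^{\cal C(\lie g)_\mathbb K}_{V_\mathbb K(\lambda)}(V)=\ch(V)_\lambda$, which is determined by $\ch(V)$.

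Next I would set up the inductive step. Replace $V$ by a module $V'$ with $\ch(V')=\ch(V)-m\,\ch(V_\mathbb K(\lambda))$, where $m=\ch(V)_\lambda$; this difference lies in $\mathbb Z_+[P]$ precisely because $\ch(V_\mathbb K(\lambda))$ is supported on weights $\le\lambda$ with $\lambda$-coefficient $1$, so subtracting $m$ copies removes the top and leaves nonnegative coefficients, and the new support is strictly smaller in the sense that its set of maximal elements has moved down. (Here I use the earlier observation that $\ch(V_\mathbb K(\lambda))$ depends only on the characteristic of $\mathbb K$, and in particular agrees with the classical Weyl-dimension data.) Since for any $\mu\in P^+$ the multiplicity $\mult^{\cal C(\lie g)_\mathbb K}_{V_\mathbb K(\mu)}(V)$ is additive on short exact sequences and $V_\mathbb K(\lambda)$ contributes nothing to $V_\mathbb K(\mu)$ for $\mu\not\ge\lambda$, repeated subtraction of top pieces computes all the multiplicities. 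Formally: define inductively $c_\lambda(V):=\ch(V)_\lambda$ for $\lambda$ maximal in $\mathrm{supp}\,\ch(V)$, and $c_\mu(V):=c_\mu\big(\text{any }V'\text{ with }\ch(V')=\ch(V)-\sum_{\nu\text{ maximal}}c_\nu(V)\ch(V_\mathbb K(\nu))\big)$; one checks $c_\mu(V)=\mult^{\cal C(\lie g)_\mathbb K}_{V_\mathbb K(\mu)}(V)$ by descending induction on the maximal weight. The recursion terminates because $P$ is partially ordered by $Q^+$ and the weights occurring are bounded above, so only finitely many $\mu$ are relevant.

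There is no serious obstacle here; the statement is essentially bookkeeping once one has Theorem \ref{t:cig} and the additivity \eqref{e:char}. The only point requiring a word of care is the well-definedness of the recursion: the formula $\ch(V')=\ch(V)-m\,\ch(V_\mathbb K(\lambda))$ does not single out a particular module $V'$, but it does not need to, since the output $c_\mu$ depends only on the character $\ch(V')$, and any two modules with the same character yield the same $c_\mu$ by the inductive hypothesis. Equivalently, one can phrase the whole argument at the level of the Grothendieck ring: the classes $[V_\mathbb K(\lambda)]$, $\lambda\in P^+$, form a $\mathbb Z$-basis of $\mathrm{Gr}(\cal C(\lie g)_\mathbb K)$ (by Theorem \ref{t:cig}(ii)--(iii)), and $\ch$ is injective on this basis because the leading terms $e^\lambda$ are distinct and unitriangular with respect to $\le$; therefore $\ch$ is injective on $\mathrm{Gr}(\cal C(\lie g)_\mathbb K)$ and the multiplicities, being the coordinates in the $[V_\mathbb K(\lambda)]$-basis, are recovered from $\ch(V)$ by inverting the (row-finite, unitriangular) transition matrix. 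I would present the explicit subtraction algorithm as in the statement preceding the corollary, since it is the most transparent form of the argument.
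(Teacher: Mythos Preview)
Your proposal is correct and is precisely the algorithm the paper has in mind: the paper itself gives no detailed proof, merely remarking that ``using the first equality above, one easily devises an algorithm'' from Theorem~\ref{t:cig} and \eqref{e:char}. Your descending-induction/subtraction argument (and its Grothendieck-ring rephrasing) is exactly that algorithm made explicit.
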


In practice, in order to compute $\mult^{\cal C(\lie g)_\mathbb K}_{V_\mathbb K(\lambda)}(V)$ from a given $\ch(V)$, one must know $\ch(V_\mathbb K(\mu))$ for all $\mu\in P^+$ such that $\ch(V)_\mu\ne 0$. Character formulas are some of the most interesting results in combinatorial representation theory.  In the characteristic zero case, there are several formulas for computing $\ch(V_\mathbb K(\lambda))$ which can be found in \cite[Chapter VI]{humb}. In positive characteristic, the general solution for this problem is still open. We refer to \cite{and,don} for surveys of the related topics. See also \cite{fies,fiem} for the most recent developments.

\subsection{} In the next two subsections we review some results proved in \cite{jmnacf}. We begin with results on the classification of the finite-dimensional irreducible $U(\tlie g)_\mathbb K$-modules.

A finite-dimensional $U(\tlie g)_\mathbb K$-module $V$ is said to be a highest-quasi-$\ell$-weight module if there exists $\gb\omega\in\gb M_\mathbb F$ and a $\mathbb K[\gb\Lambda]$-submodule $\cal V$ of $V$ such that
\begin{equation}\label{e:ehwrel}
\cal V\cong\cal K(\gb\omega), \qquad U(\tlie n^+)_\mathbb K^0\cal V=0, \quad\text{and}\quad V=U(\tlie n^-)_\mathbb K\cal V.
\end{equation}
When $\deg(\gb\omega)=1$, we simply say $V$ is a highest-$\ell$-weight module. The conjugacy class $[\gb\omega]$ is said to be the highest (quasi)-$\ell$-weight of $V$ and $\cal V$ is said to be the highest-(quasi)-$\ell$-weight space of $V$.

Let $\wt:\cal P_\mathbb F\to P$ be the group homomorphism such that $\wt(\gb\omega)(h_i) =$ \linebreak $\deg(\gb\omega_i(u))$ for all $i\in I$ and all $\gb\omega\in\cal P_\mathbb F^+$. Here $\deg(\gb\omega_i(u))$ is the degree of $\gb\omega_i(u)$ as a polynomial.

\begin{thm}\label{t:ciwm}
Let $V$ be a finite-dimensional $U(\tlie g)_\mathbb K$-module. Then:
\begin{enumerate}
\item $V=\opl_{[\gb\varpi]\in\cal P_{\mathbb F,\mathbb K}}^{} V_{\gb\varpi}$ with $V_{\gb\varpi}$ a $\mathbb K[\gb\Lambda]$-submodule of $V$ isomorphic to a self-extension of $\cal K(\gb\varpi)$. Moreover, $V_\mu = \opl_{[\gb\varpi]:\wt(\gb\varpi)=\mu}^{} V_\gb\varpi$.
\item If $V$ is irreducible, $V$ is a highest-quasi-$\ell$-weight module with highest quasi-$\ell$-weight in $\cal P_{\mathbb F,\mathbb K}^+$.
\item Given $\gb\omega\in\cal P_\mathbb F^+$, there exists a universal finite-dimensional highest-quasi-$\ell$-weight $U(\tlie g)_\mathbb K$-module of highest quasi-$\ell$-weight $[\gb\omega]$, denoted by $W_\mathbb K(\gb\omega)$. Moreover, $W_\mathbb K(\gb\omega)$ has a unique irreducible quotient denoted by $V_\mathbb K(\gb\omega)$.
\item The isomorphism classes of $W_\mathbb K(\gb\omega)$ and $V_\mathbb K(\gb\omega)$ depend only on $[\gb\omega]\in\cal P_{\mathbb F,\mathbb K}^+$.
\item If $V$ is a highest-quasi-$\ell$-weight module of highest quasi-$\ell$-weight $[\gb\omega]\in\cal P_{\mathbb F,\mathbb K}^+$, then $V_\gb\varpi\ne 0$ only if there exists $\gb\varpi'\in[\gb\varpi]$ such that $\gb\varpi'\le \gb\omega$. Moreover, $V_\gb\omega$ is the highest-quasi-$\ell$-weight space of $V$.
\hfill\qedsymbol
\end{enumerate}
\end{thm}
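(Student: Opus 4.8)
The strategy is to pass to the algebraic closure $\mathbb F$, where the classification of the finite-dimensional $U(\tlie g)_\mathbb F$-modules is the hyperalgebraic analogue of the Chari--Pressley theory carried out in \cite{jmhla}, and then to descend from $\mathbb F$ to $\mathbb K$ using Galois theory in the form of Theorem \ref{t:irpa}; assertion (a) is obtained more directly, from the structure of the commutative algebra $\mathbb K[\gb\Lambda]$.

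For (a), restrict the action to $\mathbb K[\gb\Lambda]$. The image of $\mathbb K[\gb\Lambda]$ in ${\rm End}_\mathbb K(V)$ is a finite-dimensional commutative $\mathbb K$-algebra, hence a finite product of local Artinian $\mathbb K$-algebras; this yields a decomposition $V=\bigoplus_{[\gb\varpi]}V_{\gb\varpi}$ in which the $[\gb\varpi]$ are distinct (Theorem \ref{t:irpa}(b)) and $\cal K(\gb\varpi)$ is the unique irreducible $\mathbb K[\gb\Lambda]$-constituent of $V_{\gb\varpi}$ (Theorem \ref{t:irpa}(a)), so that $V_{\gb\varpi}$ is a self-extension of $\cal K(\gb\varpi)$. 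Extending scalars to $\mathbb F$ and using the corresponding facts over $\mathbb F$ shows that the $\ell$-weights occurring are rational, i.e.\ $[\gb\varpi]\in\cal P_{\mathbb F,\mathbb K}$. Since $\tlie h$ is abelian, $\mathbb K[\gb\Lambda]$ commutes with $U(\lie h)_\mathbb K$ inside $U(\tlie h)_\mathbb K$, so the decomposition above refines the weight-space decomposition of Theorem \ref{t:cig}(a); the equality $V_\mu=\bigoplus_{\wt(\gb\varpi)=\mu}V_{\gb\varpi}$ then reduces, after extension of scalars, to the fact that on the one-dimensional $\ell$-weight space of $V^\mathbb F$ of $\ell$-weight $\gb\varpi'\in\cal P_\mathbb F$ the element $\binom{h_i}{k}$ acts by the scalar $\binom{\wt(\gb\varpi')(h_i)}{k}$ --- a consequence of the defining identity relating the $\Lambda_{i,r}$ to the $h_{i,s}$ together with the $\lie{sl}_2$-reduction, as in \cite{jmhla}.

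For (b) and (e), assume $V$ is irreducible. Among the $\ell$-weights of $V$ pick $\gb\omega$ with $\wt(\gb\omega)$ maximal in $P$ and set $\cal V=V_{\gb\omega}$ as in (a). Since $U(\tlie n^+)_\mathbb K^0$ strictly raises $P$-weights we get $U(\tlie n^+)_\mathbb K^0\cal V=0$; as $\cal V$ is $U(\tlie h)_\mathbb K$-stable, the standard PBW argument (using the integral triangular decomposition) shows that $U(\tlie n^-)_\mathbb K\cal V$ is a nonzero $U(\tlie g)_\mathbb K$-submodule, hence all of $V$, and the maximality of $\wt(\gb\omega)$ then forces $\cal V=V_{\wt(\gb\omega)}$. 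That $\cal V\cong\cal K(\gb\omega)$ --- that is, the self-extension has length one --- and that $[\gb\omega]\in\cal P^+_{\mathbb F,\mathbb K}$ follow after extending scalars: $\cal V^\mathbb F$ is the span of the highest-$\ell$-weight lines of $V^\mathbb F$, whose $\mathbb F$-dimension is $\deg(\gb\omega)$ once the structure of $V^\mathbb F$ is read off from \cite{jmhla} and Theorem \ref{t:irpa}(c), which therefore gives $\dim_\mathbb K\cal V=\deg(\gb\omega)$. This proves (b). For (e), extension of scalars transfers to $V$ the weight/$\ell$-weight inequalities valid for highest-$\ell$-weight $U(\tlie g)_\mathbb F$-modules (reflecting that $U(\tlie n^-)_\mathbb F$ moves $\ell$-weights down within $\cal Q^+_\mathbb F$), and the equality $V_{\gb\omega}=\cal V$ is the uniqueness in (a).

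For (c) and (d), extend $\cal K(\gb\omega)$ to a module over $U(\tlie h)_\mathbb K U(\tlie n^+)_\mathbb K$ by letting $U(\tlie n^+)_\mathbb K^0$ act by zero and letting $U(\lie h)_\mathbb K$ and $\mathbb K[\gb\Lambda^-]$ act through the characters determined by the weight $\wt(\gb\omega)$ and by the trivial $\ell$-weight, and put
\[
\widetilde W_\mathbb K(\gb\omega)=U(\tlie g)_\mathbb K\otimes_{U(\tlie h)_\mathbb K U(\tlie n^+)_\mathbb K}\cal K(\gb\omega).
\]
By the integral triangular decomposition $\widetilde W_\mathbb K(\gb\omega)\cong U(\tlie n^-)_\mathbb K\otimes_\mathbb K\cal K(\gb\omega)$ as $\mathbb K$-vector spaces; $\widetilde W_\mathbb K(\gb\omega)$ is generated by its $\mathbb K[\gb\Lambda]$-submodule $\cal K(\gb\omega)$, which is annihilated by $U(\tlie n^+)_\mathbb K^0$, and it is universal with this property, so that every finite-dimensional highest-quasi-$\ell$-weight $U(\tlie g)_\mathbb K$-module of highest quasi-$\ell$-weight $[\gb\omega]$ --- cf.\ \eqref{e:ehwrel} --- is a quotient of it; since $\wt(\gb\omega)$ and the isomorphism class of $\cal K(\gb\omega)$ depend only on $[\gb\omega]$ (Theorem \ref{t:irpa}(b)), so does $\widetilde W_\mathbb K(\gb\omega)$, giving (d). After extension of scalars $\widetilde W_\mathbb K(\gb\omega)^\mathbb F$ is an iterated extension of copies of the universal highest-$\ell$-weight $U(\tlie g)_\mathbb F$-modules attached to the conjugates of $\gb\omega$, whose maximal finite-dimensional quotients are the classical Weyl modules $W_\mathbb F(\gb\omega')$, finite-dimensional by \cite{jmhla}; hence the finite-dimensional quotients of $\widetilde W_\mathbb K(\gb\omega)$ have bounded dimension, and among them there is a universal one $W_\mathbb K(\gb\omega)$ (any one of maximal dimension surjects onto all the others). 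Finally, as $\cal K(\gb\omega)$ is irreducible and coincides with the $\gb\omega$-$\ell$-weight space $W_\mathbb K(\gb\omega)_{\gb\omega}$, part (e) shows that a submodule $N$ of $W_\mathbb K(\gb\omega)$ is proper iff $N_{\gb\omega}=0$; because the $\ell$-weight grading is strict, the sum of all proper submodules again has trivial $\gb\omega$-component and is therefore proper, so $W_\mathbb K(\gb\omega)$ has a unique maximal submodule and hence a unique irreducible quotient $V_\mathbb K(\gb\omega)$.

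The main obstacle is the descent underlying (b) and (c): one must check that the finitely many Galois-conjugate highest-$\ell$-weight lines of $V^\mathbb F$ genuinely assemble into a single $\mathbb K$-form isomorphic to $\cal K(\gb\omega)$, carrying exactly the self-extension structure prescribed by the inseparability degree $\ideg(\gb\omega)$, and that both finite-dimensionality and the universal property survive this assembly --- which is precisely the content that Theorem \ref{t:irpa}(c) is designed to supply.
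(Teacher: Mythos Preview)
The paper does not prove this theorem: it is stated with a closing \qedsymbol\ as part of the review of results from \cite{jmnacf} (see the opening sentence of that subsection). So there is no ``paper's own proof'' to compare against; your sketch is an attempted reconstruction of the argument in \cite{jmnacf}.

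Your outline is broadly in the right spirit, but two points deserve flagging. First, in part (b) your justification that the top quasi-$\ell$-weight space $\cal V$ has length exactly one (i.e.\ $\cal V\cong\cal K(\gb\omega)$ rather than a longer self-extension) appeals to ``the structure of $V^\mathbb F$ read off from \cite{jmhla} and Theorem \ref{t:irpa}(c)''. But at this stage you do not yet know that $V^\mathbb F$ is a direct sum of the $V_\mathbb F(\gb\omega')$ --- that is the content of Theorem \ref{t:forms}(b), which in the paper's logical order is \emph{downstream} of Theorem \ref{t:ciwm}. A cleaner and self-contained argument is available over $\mathbb K$ alone: pick a simple $\mathbb K[\gb\Lambda]$-submodule $S\cong\cal K(\gb\omega)$ inside $V_{\gb\omega}$, observe that $U(\tlie n^+)_\mathbb K^0 S=0$ and that $U(\tlie h)_\mathbb K S$ stays in the top weight space and hence (by commutativity and simplicity of $S$ as a $\mathbb K[\gb\Lambda]$-module, together with the relation between $\gb\Lambda^-$ and $\gb\Lambda^+$ on highest-$\ell$-weight vectors established in \cite{jmhla}) in $S$; then $U(\tlie n^-)_\mathbb K S$ is a nonzero submodule, hence all of $V$, and intersecting with the top weight space forces $V_{\gb\omega}=S$ since $U(\tlie n^-)_\mathbb K$ has trivial weight-zero component.

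Second, in part (c) your induction data for $\widetilde W_\mathbb K(\gb\omega)$ lets $\mathbb K[\gb\Lambda^-]$ act by the ``trivial $\ell$-weight''. That is not the correct action: on highest-$\ell$-weight vectors the elements $\Lambda_{i,-r}$ act by scalars determined by $\gb\omega$ (the negative expansion of the same Drinfeld rational function), not by zero. If you impose the trivial action, the resulting induced module will fail to have any nonzero finite-dimensional quotients, because the finite-dimensional relations force the correct $\gb\Lambda^-$-action. You should either induce from $U(\tlie n^+)_\mathbb K\,\mathbb K[\gb\Lambda^+]\,U(\lie h)_\mathbb K$ only and then quotient, or specify the $\gb\Lambda^-$-action compatibly with $\gb\omega$ as in \cite{jmhla,jmnacf}.
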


A conjugacy class $[\gb\varpi]$ for which $V_\gb\varpi\ne 0$ is called a quasi-$\ell$-weight of $V$ and $V_\gb\varpi$ is the corresponding quasi-$\ell$-weight space. When $\deg(\gb\varpi)=1$, i.e., when $\gb\varpi\in\cal P_\mathbb K\subseteq\cal P_{\mathbb F,\mathbb K}$, we simply say $[\gb\varpi]$ is an $\ell$-weight of $V$. The module $W_\mathbb K(\gb\omega)$ is called the Weyl module of highest (quasi)-$\ell$-weight $[\gb\omega]$.

\subsection{} Now we recall results concerning base change. All the results of this note can be regarded as applications of part (b) the following theorem.

{\samepage
\begin{thm}\label{t:forms}
Let  $\gb\omega\in\cal P_\mathbb F^+$, $V=V_\mathbb F(\gb\omega)$, and $W=W_\mathbb F(\gb\omega)$.
\begin{enumerate}
\item If  $v$ is a nonzero vector in $V_\gb\omega$, then $U(\tlie g)_\mathbb Kv\cong V_\mathbb K(\gb\omega)$ and $\ch(V_\mathbb K(\gb\omega)) = \deg(\gb\omega)\ch(V_\mathbb F(\gb\omega))$. Similarly, if  $v$ is a nonzero vector in $W_\gb\omega$, then $U(\tlie g)_\mathbb Kv$ $\cong W_\mathbb K(\gb\omega)$ and $\ch(W_\mathbb K(\gb\omega)) = \deg(\gb\omega)$ $\ch(W_\mathbb F(\gb\omega))$.
\item $(V_\mathbb K(\gb\omega))^\mathbb F\cong\opl_{\gb\omega'\in[\gb\omega]}^{} V^\mathbb F(\gb\omega')$ where $V^\mathbb F(\gb\omega')$ is an indecomposable self-exten\-sion of $V_\mathbb F(\gb\omega')$ of length $\ideg(\gb\omega)$. Similarly, $(W_\mathbb K(\gb\omega))^\mathbb F\cong\opl_{\gb\omega'\in[\gb\omega]}^{} W^\mathbb F(\gb\omega')$ where $W^\mathbb F(\gb\omega')$ is an indecomposable self-extension of $W_\mathbb F(\gb\omega')$ of length $\ideg(\gb\omega)$.
\text{}\hfill\qedsymbol
\end{enumerate}
\end{thm}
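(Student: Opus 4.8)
The plan is to derive Theorem \ref{t:forms} from the structural results already available, most importantly Theorem \ref{t:irpa}(c) on the decomposition of $\cal K(\gb\omega)^\mathbb F$ and Lemma \ref{l:compseries} on base change of composition series. I will treat $V=V_\mathbb F(\gb\omega)$ and $W=W_\mathbb F(\gb\omega)$ in parallel, since the arguments are formally identical; I write $M$ for either.

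\textbf{Part (a).} First I would fix a nonzero $v\in M_\gb\omega$ and set $M'=U(\tlie g)_\mathbb Kv$. By the triangular decomposition $U(\tlie g)_\mathbb K=U(\tlie n^-)_\mathbb KU(\tlie h)_\mathbb KU(\tlie n^+)_\mathbb K$ and the fact that $v$ generates a highest-(quasi)-$\ell$-weight space over $\mathbb F$, one checks that $\cal V':=\mathbb K[\gb\Lambda]v=\cal K(\gb\omega)$ (in the notation of \S\ref{ss:l-lattices}), that $U(\tlie n^+)_\mathbb K^0\cal V'=0$, and that $M'=U(\tlie n^-)_\mathbb K\cal V'$; hence $M'$ is a highest-quasi-$\ell$-weight $U(\tlie g)_\mathbb K$-module of highest quasi-$\ell$-weight $[\gb\omega]$. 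When $M=V$, the module $M'$ is moreover irreducible: any nonzero submodule, being $\tlie h$-stable, must meet $M'_\gb\omega=\cal V'$ by Theorem \ref{t:ciwm}(e), and $\cal V'$ is irreducible over $\mathbb K[\gb\Lambda]$ by Theorem \ref{t:irpa}(a), so it is contained in the submodule and the submodule is all of $M'$; thus $M'\cong V_\mathbb K(\gb\omega)$. When $M=W$, universality of the Weyl module over $\mathbb F$ together with the universal property over $\mathbb K$ identifies $M'$ with $W_\mathbb K(\gb\omega)$: the $U(\tlie g)_\mathbb F$-module $(M')^\mathbb F$ is highest-quasi-$\ell$-weight of highest $\ell$-weight $\gb\omega$ over $\mathbb F$, hence a quotient of $W_\mathbb F(\gb\omega)=W$, but it also surjects onto $W$ (as $(M')^\mathbb F$ contains $M$ as a subspace generating over $U(\tlie g)_\mathbb F$... more carefully, $M'\subseteq M$ and $M=U(\tlie g)_\mathbb FM'$ since $M$ is generated over $\mathbb F$ by its highest-$\ell$-weight space), forcing $(M')^\mathbb F=W$ and $M'=W_\mathbb K(\gb\omega)$. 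For the character statement, restrict to $\mathbb K[\gb\Lambda]$: by Theorem \ref{t:ciwm}(a) the $\mu$-weight space of $M'$ is $\opl_{[\gb\varpi]:\wt(\gb\varpi)=\mu}^{}(M')_\gb\varpi$, and I would argue that the isomorphism type of each quasi-$\ell$-weight space is unchanged by the coefficient field while its $\mathbb K$-dimension is multiplied by $\deg(\gb\varpi)$; more efficiently, $\dim_\mathbb K(M')=\dim_\mathbb F(M)$ is false in general, so instead I use that $(M')^\mathbb F$ has the same weights as $M$ with multiplicities scaled — this is exactly what part (b) gives, so I would actually prove (b) first and deduce the character formula of (a) from $\ch(M')=\ch((M')^\mathbb F)$ and the length-$\deg(\gb\omega)$ statement in (b).

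\textbf{Part (b).} Applying $(-)^\mathbb F$ to the $U(\tlie g)_\mathbb K$-module $M'\cong V_\mathbb K(\gb\omega)$ (resp. $W_\mathbb K(\gb\omega)$), I get a $U(\tlie g)_\mathbb F$-module $(M')^\mathbb F$. As a $\mathbb K[\gb\Lambda]$-module, $M'$ restricts to a direct sum involving a copy of $\cal K(\gb\omega)$ in degree $\gb\omega$, and $\cal K(\gb\omega)^\mathbb F\cong\opl_{\gb\omega'\in[\gb\omega]}^{}V_{\gb\omega'}$ by Theorem \ref{t:irpa}(c), with each $V_{\gb\omega'}$ an indecomposable self-extension of $\cal F(\gb\omega')$ of length $\ideg(\gb\omega)$. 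This decomposes $((M')^\mathbb F)_\gb\omega=\opl_{\gb\omega'\in[\gb\omega]}^{}V_{\gb\omega'}$; set $M^\mathbb F(\gb\omega'):=U(\tlie g)_\mathbb FV_{\gb\omega'}$ for each $\gb\omega'\in[\gb\omega]$. I claim $(M')^\mathbb F=\opl_{\gb\omega'\in[\gb\omega]}^{}M^\mathbb F(\gb\omega')$ and that each summand is an indecomposable self-extension of $V_\mathbb F(\gb\omega')$ (resp. $W_\mathbb F(\gb\omega')$) of length $\ideg(\gb\omega)$. For the direct-sum decomposition: the $M^\mathbb F(\gb\omega')$ have pairwise distinct highest quasi-$\ell$-weights (the $\gb\omega'$ are distinct elements of $\cal P_\mathbb F$), they together generate $(M')^\mathbb F$ because their span contains the generating space $((M')^\mathbb F)_\gb\omega$, and a standard highest-weight argument using Theorem \ref{t:ciwm}(e) shows the sum is direct. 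For the length and self-extension structure: take a composition series of $\cal K(\gb\omega)$ as a $\mathbb K[\gb\Lambda]$-module — it has length $1$ since $\cal K(\gb\omega)$ is irreducible, but its $\mathbb F$-base change $\cal K(\gb\omega)^\mathbb F$ has composition length $\deg(\gb\omega)$ with all factors $\cong\cal F(\gb\omega')$ ranging over $[\gb\omega]$, each appearing $\ideg(\gb\omega)$ times. Better: build a composition series of the $U(\tlie g)_\mathbb K$-module $V_\mathbb K(\gb\omega)$ — it is irreducible, so length $1$ — and base-change via Lemma \ref{l:compseries}; this gives a filtration of $(V_\mathbb K(\gb\omega))^\mathbb F$ whose single subquotient is $(V_\mathbb K(\gb\omega))^\mathbb F$ itself, which is unhelpful, so instead I compare Jordan-H\"older factors directly: every composition factor of $(V_\mathbb K(\gb\omega))^\mathbb F$ is a finite-dimensional irreducible $U(\tlie g)_\mathbb F$-module, hence some $V_\mathbb F(\gb\varpi)$; restricting to $\mathbb K[\gb\Lambda]$ and using that the $\ell$-weights of $(V_\mathbb K(\gb\omega))^\mathbb F$ all lie in $\bigcup_{\gb\omega'\in[\gb\omega]}\{\gb\varpi':\gb\varpi'\le\gb\omega'\}$ (by base-changing Theorem \ref{t:ciwm}(e)) together with the explicit computation of $((V_\mathbb K(\gb\omega))^\mathbb F)_\gb\omega=\cal K(\gb\omega)^\mathbb F$, one sees the only composition factors with maximal $\ell$-weight are the $V_\mathbb F(\gb\omega')$, each occurring with multiplicity $\ideg(\gb\omega)$, and the sub $M^\mathbb F(\gb\omega')$ they generate inside the $\gb\omega'$-isotypic-in-top part is forced (by indecomposability of $V_{\gb\omega'}$ as a $\mathbb K[\gb\Lambda]$-module together with the uniqueness of the irreducible quotient in Theorem \ref{t:ciwm}(c)) to be an indecomposable module all of whose composition factors are $V_\mathbb F(\gb\omega')$ — and a dimension/character count using part (a)'s identity $\ch(V_\mathbb K(\gb\omega))=\deg(\gb\omega)\ch(V_\mathbb F(\gb\omega))$ pins the length of $M^\mathbb F(\gb\omega')$ to exactly $\ideg(\gb\omega)=\deg(\gb\omega)/|[\gb\omega]|$. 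The Weyl-module case is identical, replacing "irreducible" by "universal highest-quasi-$\ell$-weight" and using the universal property to identify subquotients.

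\textbf{Main obstacle.} The delicate point is the self-extension and indecomposability claim: one must show that $M^\mathbb F(\gb\omega')$ is not merely filtered by copies of $V_\mathbb F(\gb\omega')$ but is indecomposable of the exact length $\ideg(\gb\omega)$, and that the different $M^\mathbb F(\gb\omega')$ split off as a genuine direct sum rather than being glued by extensions. I would handle indecomposability by transporting it from the $\mathbb K[\gb\Lambda]$-module level — $V_{\gb\omega'}$ is indecomposable by Theorem \ref{t:irpa}(c), its socle as a $\mathbb K[\gb\Lambda]$-module is the line $\cal F(\gb\omega')$, and this line generates a unique maximal $U(\tlie g)_\mathbb F$-submodule structure, so any direct-sum decomposition of $M^\mathbb F(\gb\omega')$ would split $V_{\gb\omega'}$ in its top — and I would handle the directness of the sum and the exact length by the character identity of part (a), which forces $\sum_{\gb\omega'}\ell(M^\mathbb F(\gb\omega'))=\deg(\gb\omega)$ with $|[\gb\omega]|$ equal summands. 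This circular-looking dependence between (a) and (b) is resolved by proving the generation/irreducibility parts of (a) first, then (b) as above using only that $M'\cong V_\mathbb K(\gb\omega)$ resp. $W_\mathbb K(\gb\omega)$, and finally reading off the character formula of (a) from (b).
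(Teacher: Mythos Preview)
The paper does not prove Theorem \ref{t:forms}. It is stated with a terminal \qedsymbol\ as a result \emph{recalled} from \cite{jmnacf}; the surrounding text reads ``Now we recall results concerning base change,'' and later, in the proof of Theorem \ref{t:charl}, the paper points explicitly to the source: ``see the proof of Theorem 2.12 and \S A.3 of \cite{jmnacf}.'' There is therefore no in-paper argument to compare your proposal against.

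For what it is worth, your overall strategy---lifting the decomposition of $\cal K(\gb\omega)^\mathbb F$ from Theorem \ref{t:irpa}(c) to the full $U(\tlie g)_\mathbb F$-module by generating from the highest-$\ell$-weight space, and then reading off the character identity of (a) from the length count in (b)---is the natural one and is consistent with how the paper \emph{uses} the theorem. Two places in your sketch would need tightening before it is a proof: the Weyl-module identification in (a), where your argument that $(M')^\mathbb F$ both maps to and from $W_\mathbb F(\gb\omega)$ is muddled (note $M'\subseteq W$ as $\mathbb K$-spaces does not directly give a $U(\tlie g)_\mathbb F$-module map $(M')^\mathbb F\to W$; one should instead invoke the universal property of $W_\mathbb K(\gb\omega)$ directly), and the indecomposability/self-extension claim in (b), where you correctly flag the difficulty but your proposed fix via ``splitting $V_{\gb\omega'}$ in its top'' is only a sketch---one really needs the Galois-descent machinery of \cite[\S A.3]{jmnacf} to see that the summands $M^\mathbb F(\gb\omega')$ are permuted by ${\rm Aut}(\mathbb F/\mathbb K)$ and hence have equal length, and that indecomposability descends from the $\mathbb K[\gb\Lambda]$-level.
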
}

\section{Clebsch-Gordan Problem}\label{s:cg}

\subsection{}\label{ss:cgac} We now show how to solve the Clebsch-Gordan problem for the category $\cal C(\tlie g)_\mathbb K$ of finite-dimensional $U(\tlie g)_\mathbb K$-modules in terms of Clebsch-Gordan coefficients for $\cal C(\lie g)_\mathbb F$. In order to do so, we first reduce the problem to the category $\cal C(\tlie g)_\mathbb F$.
To simplify notation, given $\gb\omega\in\cal P_\mathbb F^+$ and a finite-dimensional $U(\tlie g)_\mathbb K$-module $V$, let
\begin{equation}
\mult^\mathbb K_{\gb\omega}(V)=\mult^{\cal C(\tlie g)_\mathbb K}_{V_\mathbb K(\gb\omega)}(V).
\end{equation}
Similarly, given $\gb\omega,\gb\varpi,\gb\pi\in\cal P_\mathbb F^+$, let
\begin{equation}
\mult^\mathbb K_{\gb\omega}(\gb\varpi,\gb\pi)=\mult^{\cal C(\tlie g)_\mathbb K}_{V_\mathbb K(\gb\omega)}(V_\mathbb K(\gb\varpi)\otimes V_\mathbb K(\gb\pi)).
\end{equation}

Notice that, if $\wt(\gb\omega)=\wt(\gb\varpi\gb\pi)$, then $\mult^\mathbb F_{\gb\omega}(\gb\varpi,\gb\pi)\le 1$ with equality holding if and only if $\gb\omega=\gb\varpi\gb\pi$.

Given $\gb\omega,\gb\varpi,\gb\pi\in\cal P_\mathbb F$, define
\begin{equation}
[\gb\omega:\gb\varpi,\gb\pi] = \{(\gb\varpi',\gb\pi')\in[\gb\varpi]\times[\gb\pi]:\gb\varpi'\gb\pi' =\gb\omega\}.
\end{equation}
Quite clearly the cardinality $|[\gb\omega:\gb\varpi,\gb\pi]|$ depends only on the conjugacy classes of $\gb\omega,\gb\varpi,\gb\pi$.

{\samepage
\begin{thm}\label{t:cg}Let $\gb\omega,\gb\varpi,\gb\pi\in\cal P_\mathbb F^+$. Then,
$$\mult^\mathbb K_{\gb\omega}(\gb\varpi,\gb\pi) = \frac{\ideg(\gb\varpi)\ \ideg(\gb\pi)}{\ideg(\gb\omega)}\sum_{\gb\varpi'\in[\gb\varpi]}\sum_{\gb\pi'\in[\gb\pi]}  \mult^\mathbb F_{\gb\omega}(\gb\varpi',\gb\pi').$$
In particular,  $\mult^\mathbb K_{\gb\omega}(\gb\varpi,\gb\pi)=\frac{\ideg(\gb\varpi)\ \ideg(\gb\pi)}{\ideg(\gb\omega)}\ |[\gb\omega:\gb\varpi,\gb\pi]|$ when $\wt(\gb\omega)=\wt(\gb\varpi\gb\pi)$.
\end{thm}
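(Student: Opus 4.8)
The plan is to prove the theorem in two stages. First I would pass from $\mathbb K$ to $\mathbb F$ using base change, and then I would compute everything over $\mathbb F$ using the ordinary (separable) Clebsch-Gordan coefficients together with the combinatorics of conjugacy classes. For the first stage, start with a composition series of the $U(\tlie g)_\mathbb K$-module $M = V_\mathbb K(\gb\varpi)\otimes V_\mathbb K(\gb\pi)$ and apply Lemma \ref{l:compseries} to obtain a filtration of $M^\mathbb F$ whose factors are the base-changes of the factors over $\mathbb K$. By Theorem \ref{t:forms}(b), the base-change of an irreducible $V_\mathbb K(\gb\omega)$ is $\opl_{\gb\omega'\in[\gb\omega]}^{} V^\mathbb F(\gb\omega')$, each summand $V^\mathbb F(\gb\omega')$ being an indecomposable self-extension of $V_\mathbb F(\gb\omega')$ of length $\ideg(\gb\omega)$. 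Hence, in the Grothendieck group of $\cal C(\tlie g)_\mathbb F$, each composition factor $V_\mathbb K(\gb\omega)$ of $M$ contributes $\ideg(\gb\omega)$ copies of $V_\mathbb F(\gb\omega')$ for each $\gb\omega'\in[\gb\omega]$. Summing over the series, this gives the identity
\begin{equation}\label{e:bc1}
\mult^\mathbb F_{\gb\omega}(M^\mathbb F) = \ideg(\gb\omega)\sum_{[\gb\nu]}\, \delta_{[\gb\omega]=[\gb\nu]}\,\mult^\mathbb K_{\gb\nu}(M) = \ideg(\gb\omega)\ \mult^\mathbb K_{\gb\omega}(M),
\end{equation}
where I have used that $\mult^\mathbb K_{\gb\nu}(M)$ depends only on $[\gb\nu]$ (Theorem \ref{t:ciwm}(d)); more precisely, fixing any representative $\gb\omega$, the multiplicity of $V_\mathbb F(\gb\omega)$ inside $M^\mathbb F$ equals $\ideg(\gb\omega)$ times the multiplicity of $V_\mathbb K(\gb\omega)$ in $M$. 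Thus $\mult^\mathbb K_{\gb\omega}(\gb\varpi,\gb\pi) = \ideg(\gb\omega)^{-1}\,\mult^\mathbb F_{\gb\omega}(M^\mathbb F)$.

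For the second stage I need to understand $M^\mathbb F$ as a $U(\tlie g)_\mathbb F$-module. By Lemma \ref{l:tpfe}, the canonical map $\varphi^\mathbb F_{V_\mathbb K(\gb\varpi),V_\mathbb K(\gb\pi)}$ is an isomorphism of $U(\tlie g)_\mathbb F$-modules, so
\begin{equation}\label{e:bc2}
M^\mathbb F \cong (V_\mathbb K(\gb\varpi))^\mathbb F \otimes_\mathbb F (V_\mathbb K(\gb\pi))^\mathbb F \cong \Bigl(\opl_{\gb\varpi'\in[\gb\varpi]}^{} V^\mathbb F(\gb\varpi')\Bigr)\otimes_\mathbb F\Bigl(\opl_{\gb\pi'\in[\gb\pi]}^{} V^\mathbb F(\gb\pi')\Bigr),
\end{equation}
again invoking Theorem \ref{t:forms}(b). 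Distributing the tensor product, $M^\mathbb F$ is the direct sum over all pairs $(\gb\varpi',\gb\pi')\in[\gb\varpi]\times[\gb\pi]$ of $V^\mathbb F(\gb\varpi')\otimes V^\mathbb F(\gb\pi')$. Now $V^\mathbb F(\gb\varpi')$ has $\ideg(\gb\varpi)$ composition factors, each isomorphic to $V_\mathbb F(\gb\varpi')$, and likewise for $V^\mathbb F(\gb\pi')$; passing to the Grothendieck ring and using that $[\,\cdot\,]$ is multiplicative there (the second equality in \eqref{e:char}, extended to the loop setting), the class of $V^\mathbb F(\gb\varpi')\otimes V^\mathbb F(\gb\pi')$ equals $\ideg(\gb\varpi)\,\ideg(\gb\pi)$ times the class of $V_\mathbb F(\gb\varpi')\otimes V_\mathbb F(\gb\pi')$. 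Therefore
\begin{equation}\label{e:bc3}
\mult^\mathbb F_{\gb\omega}(M^\mathbb F) = \ideg(\gb\varpi)\,\ideg(\gb\pi)\sum_{\gb\varpi'\in[\gb\varpi]}\sum_{\gb\pi'\in[\gb\pi]}\mult^\mathbb F_{\gb\omega}(\gb\varpi',\gb\pi').
\end{equation}
Combining \eqref{e:bc1}, \eqref{e:bc2}, and \eqref{e:bc3} yields the first displayed formula of the theorem.

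The ``in particular'' statement then follows from the observation recorded just before the theorem: when $\wt(\gb\omega)=\wt(\gb\varpi\gb\pi)$ one has $\mult^\mathbb F_{\gb\omega}(\gb\varpi',\gb\pi')\le 1$ for all $\gb\varpi',\gb\pi'$, with equality precisely when $\gb\omega=\gb\varpi'\gb\pi'$ (note $\wt(\gb\varpi')=\wt(\gb\varpi)$ and $\wt(\gb\pi')=\wt(\gb\pi)$ since weight is constant on conjugacy classes, so the weight hypothesis is preserved). Hence the double sum counts exactly the pairs $(\gb\varpi',\gb\pi')\in[\gb\varpi]\times[\gb\pi]$ with $\gb\varpi'\gb\pi'=\gb\omega$, which is $|[\gb\omega:\gb\varpi,\gb\pi]|$ by definition.

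The step I expect to be the main obstacle is making \eqref{e:bc1} fully rigorous: the Grothendieck-group bookkeeping is clean, but one must be careful that $V^\mathbb F(\gb\omega')$ for distinct $\gb\omega'\in[\gb\omega]$ are pairwise non-isomorphic and that no cancellation occurs across different conjugacy classes — i.e., that $V_\mathbb F(\gb\omega)$ appears as a composition factor of $(V_\mathbb K(\gb\nu))^\mathbb F$ only when $[\gb\nu]=[\gb\omega]$. This is exactly the content of Theorem \ref{t:forms}(b) combined with Theorem \ref{t:ciwm}(b) (distinct conjugacy classes give non-isomorphic irreducibles over $\mathbb F$), so the obstacle is really one of careful citation rather than of new mathematics; the only genuine point to check is that the self-extension structure does not interfere with counting composition factors, which it does not since length is additive on short exact sequences.
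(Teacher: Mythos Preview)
Your proof is correct and follows essentially the same approach as the paper: both arguments pass to $\mathbb F$ via Lemma~\ref{l:compseries} and Theorem~\ref{t:forms}(b) to obtain the relation $\mult^\mathbb F_{\gb\omega}(M^\mathbb F)=\ideg(\gb\omega)\,\mult^\mathbb K_{\gb\omega}(M)$, then decompose $M^\mathbb F$ using Lemma~\ref{l:tpfe} and Theorem~\ref{t:forms}(b) to obtain your \eqref{e:bc3}. The paper's proof is more terse (it writes the first relation equivalently as $\mult^\mathbb K_{\gb\omega}(V)\deg(\gb\omega)=|[\gb\omega]|\,\mult^\mathbb F_{\gb\omega}(V^\mathbb F)$), and your closing discussion of the ``obstacle'' is unnecessary since, as you yourself note, Theorem~\ref{t:forms}(b) already guarantees that $V_\mathbb F(\gb\omega)$ occurs in $(V_\mathbb K(\gb\nu))^\mathbb F$ only when $[\gb\nu]=[\gb\omega]$.
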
}

\begin{proof}
Let $V = V_\mathbb K(\gb\varpi)\otimes_\mathbb K V_\mathbb K(\gb\pi)$. It follows from  Theorem \ref{t:forms}(b) together with Lemma \ref{l:compseries} applied to a composition series of $V$ in the category $\cal C(\tlie g)_\mathbb K$ that
\begin{equation*}
\mult^\mathbb K_{\gb\omega}(V)\deg(\gb\omega)  = \sum_{\gb\omega'\in[\gb\omega]} \mult^\mathbb F_{\gb\omega'}(V^\mathbb F) = |[\gb\omega]|\ \mult^\mathbb F_{\gb\omega}(V^\mathbb F).
\end{equation*}
On the other hand, it follows from Lemma \ref{l:tpfe} and Theorem \ref{t:forms}(b) that
\begin{equation*}
\mult^\mathbb F_{\gb\omega}(V^\mathbb F) = \ideg(\gb\varpi)\ \ideg(\gb\pi)\sum_{\gb\varpi'\in[\gb\varpi]}\sum_{\gb\pi'\in[\gb\pi]} \mult^\mathbb F_{\gb\omega}(\gb\varpi',\gb\pi').\qedhere
\end{equation*}
\end{proof}

As a consequence of the previous theorem, we have the following alternate proof of the nontrivial direction of the statement of \cite[Theorem 2.18]{jmnacf}.

\begin{cor}\label{c:cg}
Suppose $\gb\varpi,\gb\pi\in\cal P_\mathbb F^+$ are such that $V_\mathbb F(\gb\varpi)\otimes V_\mathbb F(\gb\pi)\cong V_\mathbb F(\gb\varpi\gb\pi)$ and $\deg(\gb\varpi\gb\pi) = \deg(\gb\varpi)\deg(\gb\pi)$. Then, $V_\mathbb K(\gb\varpi\gb\pi)\cong V_\mathbb K(\gb\varpi)\otimes V_\mathbb K(\gb\pi)$. 
\end{cor}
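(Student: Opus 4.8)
The plan is to deduce this from Theorem~\ref{t:cg} by checking that all the inseparability degrees involved equal $1$. First, recall that under the hypothesis $V_\mathbb F(\gb\varpi)\otimes V_\mathbb F(\gb\pi)\cong V_\mathbb F(\gb\varpi\gb\pi)$, the weight of the tensor product forces $\wt(\gb\varpi\gb\pi) = \wt(\gb\varpi) + \wt(\gb\pi)$, so I may use the ``in particular'' form of Theorem~\ref{t:cg}, namely
$$\mult^\mathbb K_{\gb\varpi\gb\pi}(\gb\varpi,\gb\pi) = \frac{\ideg(\gb\varpi)\ \ideg(\gb\pi)}{\ideg(\gb\varpi\gb\pi)}\ |[\gb\varpi\gb\pi:\gb\varpi,\gb\pi]|.$$
So the whole argument reduces to two points: (1) showing this multiplicity equals $1$, and (2) upgrading ``appears with multiplicity one'' to an actual isomorphism of modules.

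For point (1), I would first observe that $\deg(\gb\varpi\gb\pi) = \deg(\gb\varpi)\deg(\gb\pi)$ together with the general inequality $\deg(\gb\varpi\gb\pi)\le\deg(\gb\varpi)\deg(\gb\pi)$ being an equality should force each conjugacy class to interact as cleanly as possible. Concretely, $\deg$ is multiplicative on the product of conjugacy classes only when the field extensions generated by the coefficients of $\gb\varpi$ and of $\gb\pi$ are linearly disjoint over $\mathbb K$; in that situation one has $|[\gb\varpi\gb\pi]| = |[\gb\varpi]|\,|[\gb\pi]|$ and, moreover, every pair $(\gb\varpi',\gb\pi')\in[\gb\varpi]\times[\gb\pi]$ gives a distinct product, so in particular there is exactly one pair with $\gb\varpi'\gb\pi' = \gb\varpi\gb\pi$ (the pair $(\gb\varpi,\gb\pi)$ itself), i.e. $|[\gb\varpi\gb\pi:\gb\varpi,\gb\pi]| = 1$. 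Combining $\deg(\gb\varpi\gb\pi)=\deg(\gb\varpi)\deg(\gb\pi)$ with $|[\gb\varpi\gb\pi]| = |[\gb\varpi]|\,|[\gb\pi]|$ gives $\ideg(\gb\varpi\gb\pi) = \ideg(\gb\varpi)\,\ideg(\gb\pi)$, and hence the formula above yields $\mult^\mathbb K_{\gb\varpi\gb\pi}(\gb\varpi,\gb\pi) = 1$. Alternatively, and perhaps more cleanly, I can avoid the linear-disjointness analysis entirely: apply Theorem~\ref{t:cg} directly, note that the sum $\sum_{\gb\varpi'\in[\gb\varpi]}\sum_{\gb\pi'\in[\gb\pi]}\mult^\mathbb F_{\gb\varpi\gb\pi}(\gb\varpi',\gb\pi')$ counts exactly the pairs with product in the conjugacy class of $\gb\varpi\gb\pi$ (using $\mult^\mathbb F\le 1$ in equal weight), which is $|[\gb\varpi\gb\pi:\gb\varpi,\gb\pi]|$, and then verify directly via a dimension count that $\deg(\gb\varpi)\deg(\gb\pi)|[\gb\varpi\gb\pi]| = \deg(\gb\varpi\gb\pi)|[\gb\varpi]|\,|[\gb\pi]|$ collapses the prefactor to make the product equal $1$.

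For point (2), knowing $\mult^\mathbb K_{\gb\varpi\gb\pi}(\gb\varpi,\gb\pi) = 1$ is not by itself an isomorphism, so I would argue as follows. By Theorem~\ref{t:ciwm}, $V_\mathbb K(\gb\varpi)\otimes V_\mathbb K(\gb\pi)$ is a highest-quasi-$\ell$-weight module: its $\ell$-weight space for the maximal weight $\wt(\gb\varpi)+\wt(\gb\pi) = \wt(\gb\varpi\gb\pi)$ is the tensor product of the two highest-quasi-$\ell$-weight spaces, which is a $\mathbb K[\gb\Lambda]$-module isomorphic to $\cal K(\gb\varpi)\otimes_\mathbb K\cal K(\gb\pi)$; one checks this is generated over $\mathbb K[\gb\Lambda]$ as $\cal K(\gb\varpi\gb\pi)$ (again using the clean interaction of the two conjugacy classes, equivalently that $\deg(\gb\varpi\gb\pi) = \deg(\gb\varpi)\deg(\gb\pi)$ forces the tensor product of the cyclic $\mathbb K[\gb\Lambda]$-modules to be cyclic). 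Since $U(\tlie n^+)_\mathbb K^0$ annihilates this space and $U(\tlie n^-)_\mathbb K$ acting on it generates everything (tensor of highest-weight-generated modules), $V_\mathbb K(\gb\varpi)\otimes V_\mathbb K(\gb\pi)$ is a highest-quasi-$\ell$-weight module of highest quasi-$\ell$-weight $[\gb\varpi\gb\pi]$, hence a quotient of the Weyl module $W_\mathbb K(\gb\varpi\gb\pi)$ and so surjects onto $V_\mathbb K(\gb\varpi\gb\pi)$. Since $\mult^\mathbb K_{\gb\varpi\gb\pi}(\gb\varpi,\gb\pi) = 1$ and $V_\mathbb K(\gb\varpi\gb\pi)$ appears as the head, any other composition factor would be some $V_\mathbb K(\gb\tau)$ with $[\gb\tau]$ strictly below $[\gb\varpi\gb\pi]$; a dimension/character count via Theorem~\ref{t:forms}(a), comparing $\dim_\mathbb K(V_\mathbb K(\gb\varpi)\otimes V_\mathbb K(\gb\pi)) = \deg(\gb\varpi)\deg(\gb\pi)\dim_\mathbb F(V_\mathbb F(\gb\varpi))\dim_\mathbb F(V_\mathbb F(\gb\pi))$ against $\dim_\mathbb K(V_\mathbb K(\gb\varpi\gb\pi)) = \deg(\gb\varpi\gb\pi)\dim_\mathbb F(V_\mathbb F(\gb\varpi\gb\pi))$ and using the $\mathbb F$-isomorphism hypothesis, shows the dimensions already agree, so there is no room for further factors and the surjection is an isomorphism.

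The main obstacle I anticipate is point~(1) done carefully: translating the numerical hypothesis $\deg(\gb\varpi\gb\pi) = \deg(\gb\varpi)\deg(\gb\pi)$ into the statement that the two conjugacy classes ``do not collide'' — i.e. that the only pair $(\gb\varpi',\gb\pi')$ with $\gb\varpi'\gb\pi' = \gb\varpi\gb\pi$ is the obvious one — requires being precise about how $\deg$ of a product of quasi-$\ell$-weights relates to the degrees of separable and inseparable parts of the residue-field extensions, and about whether the relevant extensions are linearly disjoint. I would try to sidestep this by keeping everything at the level of the formula in Theorem~\ref{t:cg} and a clean dimension count over $\mathbb K$ and $\mathbb F$, invoking Theorem~\ref{t:forms} rather than reproving facts about conjugacy classes; that way the corollary really is, as advertised, just an application of Theorem~\ref{t:cg}(b).
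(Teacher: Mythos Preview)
Your dimension count in point~(2) is exactly the paper's argument, and it is all that is really needed. The paper's proof is two lines: from the second statement of Theorem~\ref{t:cg} one gets $\mult^\mathbb K_{\gb\varpi\gb\pi}(\gb\varpi,\gb\pi)>0$ simply because $(\gb\varpi,\gb\pi)\in[\gb\varpi\gb\pi:\gb\varpi,\gb\pi]$, so the set is nonempty and the multiplicity is a positive integer; then Theorem~\ref{t:forms}(a) together with the two hypotheses gives
\[
\dim_\mathbb K\bigl(V_\mathbb K(\gb\varpi)\otimes V_\mathbb K(\gb\pi)\bigr)=\deg(\gb\varpi)\deg(\gb\pi)\dim_\mathbb F\bigl(V_\mathbb F(\gb\varpi)\bigr)\dim_\mathbb F\bigl(V_\mathbb F(\gb\pi)\bigr)=\deg(\gb\varpi\gb\pi)\dim_\mathbb F\bigl(V_\mathbb F(\gb\varpi\gb\pi)\bigr)=\dim_\mathbb K V_\mathbb K(\gb\varpi\gb\pi),
\]
so a composition factor already exhausts the whole module.

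The detour in your point~(1) --- pinning down $\mult=1$ via linear disjointness of the residue fields and a count of colliding pairs --- is unnecessary: knowing $\mult>0$ suffices once the dimensions match. Likewise the highest-quasi-$\ell$-weight/Weyl-module/surjection machinery in point~(2) can be dropped: a simple module appearing as a Jordan--H\"older factor of a module of the same dimension forces equality, with no need to exhibit a map. Your closing instinct (``sidestep this by keeping everything at the level of the formula in Theorem~\ref{t:cg} and a clean dimension count'') is precisely the paper's route; you just did not notice that $\mult>0$ is already enough.
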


\begin{proof}
The second statement of Theorem \ref{t:cg} implies that $\mult^\mathbb K_{\gb\varpi\gb\pi}(\gb\varpi,\gb\pi)>0$. On the other hand, the hypotheses of the corollary together with Theorem \ref{t:forms}(a) imply $\dim(V_\mathbb K(\gb\varpi)\otimes V_\mathbb K(\gb\pi))=\dim(V_\mathbb K(\gb\varpi\gb\pi))$. 
\end{proof}

\subsection{}

Next, we recall how to rephrase the classification of finite-dimensional irreducible $U(\tlie g)_\mathbb F$-modules in terms of evaluation representations. We begin with the existence of the so-called evaluation maps (see \cite[Proposition 3.3]{jmhla}).

\begin{prop}\label{p:evmap}
For every $a\in \mathbb F^{\times}$, there exists a surjective algebra homomorphism ${\rm ev}_a: U(\tlie g)_\mathbb F\to U(\lie g)_\mathbb F$ called the evaluation map at $a$.\hfill\qedsymbol
\end{prop}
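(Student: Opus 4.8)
The plan is to produce a single evaluation homomorphism over the Laurent polynomial ring $\mathbb Z[z,z^{-1}]$ and then specialize the indeterminate $z$ to the scalar $a$, thereby reducing the only delicate point --- compatibility with Garland's integral form --- to the classical evaluation map for loop algebras over a field.

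First I would work over the field $\mathbb C(z)$. Here $\tlie g\otimes_\mathbb C\mathbb C(z)$ is the loop algebra over $\mathbb C(z)$, and since $z\in\mathbb C(z)^\times$ the usual Lie algebra evaluation map $x\otimes f(t)\mapsto f(z)\,x$ is defined; it extends to an algebra homomorphism $U(\tlie g)\otimes_\mathbb C\mathbb C(z)\to U(\lie g)\otimes_\mathbb C\mathbb C(z)$ sending $(x^\pm_{\alpha,r})^{(k)}=\frac{(x^\pm_\alpha\otimes t^r)^k}{k!}$ to $z^{rk}(x^\pm_\alpha)^{(k)}$. Because $U(\tlie g)_\mathbb Z$ is free as a $\mathbb Z$-module (it admits a PBW-type basis), $U(\tlie g)_\mathbb Z\otimes_\mathbb Z\mathbb Z[z,z^{-1}]$ embeds as a subalgebra of $U(\tlie g)\otimes_\mathbb C\mathbb C(z)$, and by the displayed formula for the generators the above map restricts to it with image inside $U(\lie g)_\mathbb Z\otimes_\mathbb Z\mathbb Z[z,z^{-1}]$; that is, one obtains a homomorphism of $\mathbb Z[z,z^{-1}]$-algebras
\[
{\rm ev}_z\colon\ U(\tlie g)_\mathbb Z\otimes_\mathbb Z\mathbb Z[z,z^{-1}]\ \longrightarrow\ U(\lie g)_\mathbb Z\otimes_\mathbb Z\mathbb Z[z,z^{-1}],\qquad (x^\pm_{\alpha,r})^{(k)}\ \mapsto\ z^{rk}(x^\pm_\alpha)^{(k)}.
\]

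Next, for a given $a\in\mathbb F^\times$ I would use the ring homomorphism $\mathbb Z[z,z^{-1}]\to\mathbb F$, $z\mapsto a$ (well defined since $a$ is a unit), to base change: set ${\rm ev}_a={\rm ev}_z\otimes_{\mathbb Z[z,z^{-1}]}\mathbb F$. Using $U(\tlie g)_\mathbb Z\otimes_\mathbb Z\mathbb Z[z,z^{-1}]\otimes_{\mathbb Z[z,z^{-1}]}\mathbb F\cong U(\tlie g)_\mathbb Z\otimes_\mathbb Z\mathbb F=U(\tlie g)_\mathbb F$ (and likewise for $\lie g$), this gives an $\mathbb F$-algebra homomorphism ${\rm ev}_a\colon U(\tlie g)_\mathbb F\to U(\lie g)_\mathbb F$ determined by $(x^\pm_{\alpha,r})^{(k)}\mapsto a^{rk}(x^\pm_\alpha)^{(k)}$; on $\tlie g_\mathbb F$ it is the map $x\otimes t^r\mapsto a^r x$, which justifies the name. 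Surjectivity is then clear: taking $r=0$, the image of ${\rm ev}_a$ contains every $(x^\pm_\alpha)^{(k)}$, and these elements generate $U(\lie g)_\mathbb F$ as an $\mathbb F$-algebra.

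The main obstacle is the well-definedness of ${\rm ev}_z$ --- that sending divided powers of the loop generators to the indicated scalar multiples respects Garland's integral structure. The device above avoids verifying the Garland commutation relations by hand, exhibiting ${\rm ev}_z$ as the restriction of an evaluation map that manifestly exists over the field $\mathbb C(z)$ and then specializing; a more computational alternative would be to check directly that the defining relations of $U(\tlie g)_\mathbb Z$ among the divided powers are carried to relations in $U(\lie g)_\mathbb Z\otimes_\mathbb Z\mathbb Z[z,z^{-1}]$.
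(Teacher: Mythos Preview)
The paper does not actually prove this proposition; it simply records the statement with a citation to \cite[Proposition 3.3]{jmhla}. Your argument is correct and self-contained: working first over $\mathbb C(z)$ lets you use the existence of the ordinary Lie-algebra evaluation map to get well-definedness for free, and since $U(\tlie g)_\mathbb Z$ is by definition the $\mathbb Z$-subalgebra generated by the divided powers $(x^\pm_{\alpha,r})^{(k)}$, checking that these generators land in $U(\lie g)_\mathbb Z\otimes_\mathbb Z\mathbb Z[z,z^{-1}]$ suffices to conclude that the whole subalgebra does. The base change step and the surjectivity argument are routine. This generic-parameter device is a clean way to avoid verifying Garland's relations by hand; whether or not \cite{jmhla} proceeds in exactly this fashion, your proof stands on its own.
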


If $V$ is a $U(\lie g)_\mathbb F$-module, we denote by $V(a)$ the pull-back of $V$ by ${\rm ev}_a$.
Then, for $\lambda\in P^+$, $V_\mathbb F(\gb\omega_{\lambda,a})\cong (V_\mathbb F(\lambda))(a)$  (see \cite[\S 3B]{jmhla}). Observe that every $\gb\omega\in\cal P_\mathbb F^+$ can be uniquely written as a product of the form
\begin{equation}\label{e:factor}
\gb\omega = \prod_{j=1}^m \gb\omega_{\lambda_j,a_j} \quad\text{for some}\quad m\in\mathbb N, \lambda_j\in P^+, a_j\in\mathbb F^\times \text{ with } a_j\ne a_k\text{ if } j\ne k.
\end{equation}
Two elements $\gb\varpi,\gb\pi\in\cal P_\mathbb F^+$ are said to be relatively prime if $\gb\varpi_i(u)$ is relatively prime to $\gb\pi_j(u)$ for every $i,j\in I$.
The next proposition is an immediate consequence of \cite[Corollary 3.5 and Equation (3-13)]{jmhla}.

\begin{prop}\label{p:evrep}
If $\gb\varpi,\gb\pi\in\cal P_\mathbb F^+$ are relatively prime, then $V_\mathbb F(\gb\varpi)\otimes V_\mathbb F(\gb\pi)\cong V_\mathbb F(\gb\varpi\gb\pi)$. In particular, the finite-dimensional irreducible $U(\tlie g)_\mathbb F$-modules can be realized as tensor products of evaluations representations, i.e., if $\gb\omega = \prod\limits_{j=1}^m \gb\omega_{\lambda_j,a_j}$ as in \eqref{e:factor}, then $V_\mathbb F(\gb\omega)\cong \otm_{j=1}^m V_\mathbb F(\gb\omega_{\lambda_j,a_j})$.\vspace*{-17.5pt}\\ \text{}\hfill\qedsymbol
\end{prop}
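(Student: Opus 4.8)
The plan is to prove Proposition \ref{p:evrep} by first establishing the tensor-product statement for relatively prime $\gb\varpi,\gb\pi$, and then deriving the decomposition of an arbitrary $V_\mathbb F(\gb\omega)$ into a tensor product of evaluation modules as an immediate corollary, using the uniqueness of the factorization \eqref{e:factor} and an inductive argument. The cited reference \cite[Corollary 3.5 and Equation (3-13)]{jmhla} is asserted to give both ingredients, so the task is mainly to assemble them correctly.

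First I would show $V_\mathbb F(\gb\varpi)\otimes V_\mathbb F(\gb\pi)\cong V_\mathbb F(\gb\varpi\gb\pi)$ when $\gb\varpi,\gb\pi$ are relatively prime. Pick highest-$\ell$-weight vectors $v\in V_\mathbb F(\gb\varpi)_{\gb\varpi}$ and $v'\in V_\mathbb F(\gb\pi)_{\gb\pi}$, and consider $v\otimes v'$ in the tensor product. Using the comultiplication on $U(\tlie g)_\mathbb F$ one checks that $U(\tlie n^+)_\mathbb F^0 (v\otimes v')=0$ and that $\Lambda_{i,r}$ acts on $v\otimes v'$ by the coefficient of $u^r$ in $\gb\varpi_i(u)\gb\pi_i(u)$, so that $v\otimes v'$ generates a highest-$\ell$-weight submodule of highest $\ell$-weight $\gb\varpi\gb\pi$; in particular $V_\mathbb F(\gb\varpi\gb\pi)$ is a subquotient of $V_\mathbb F(\gb\varpi)\otimes V_\mathbb F(\gb\pi)$. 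For the reverse containment I would invoke the cited corollary from \cite{jmhla}, whose content is precisely that relative primeness forces the tensor product to be irreducible (this uses the action of $U(\tlie h)_\mathbb F$ on the top $\ell$-weight space together with the fact that, in the evaluation picture, the ``spectral parameters'' of $\gb\varpi$ and $\gb\pi$ are disjoint, so the two factors cannot interact). Comparing highest $\ell$-weights then forces $V_\mathbb F(\gb\varpi)\otimes V_\mathbb F(\gb\pi)\cong V_\mathbb F(\gb\varpi\gb\pi)$.

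For the ``in particular'' part, write $\gb\omega = \prod_{j=1}^m \gb\omega_{\lambda_j,a_j}$ as in \eqref{e:factor} with the $a_j$ pairwise distinct. Since $(\gb\omega_{\lambda_j,a_j})_i(u) = (1-a_ju)^{\lambda_j(h_i)}$, distinctness of the $a_j$ means the polynomials $(\gb\omega_{\lambda_j,a_j})_i(u)$ and $(\gb\omega_{\lambda_k,a_k})_{i'}(u)$ share no common root for $j\ne k$, hence any two of the factors $\gb\omega_{\lambda_j,a_j}$ are relatively prime, and more generally $\prod_{j<k}\gb\omega_{\lambda_j,a_j}$ is relatively prime to $\gb\omega_{\lambda_k,a_k}$ for each $k$. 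Inducting on $m$ and applying the first part at each step yields $V_\mathbb F(\gb\omega)\cong \otm_{j=1}^m V_\mathbb F(\gb\omega_{\lambda_j,a_j})$, and each factor $V_\mathbb F(\gb\omega_{\lambda_j,a_j})$ is isomorphic to the evaluation representation $(V_\mathbb F(\lambda_j))(a_j)$ by the identification $V_\mathbb F(\gb\omega_{\lambda,a})\cong (V_\mathbb F(\lambda))(a)$ recalled above.

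The main obstacle is the irreducibility of the tensor product for relatively prime $\gb\varpi,\gb\pi$: the highest-$\ell$-weight argument only gives $V_\mathbb F(\gb\varpi\gb\pi)$ as a subquotient for free, and upgrading this to an isomorphism is exactly where the structure of the evaluation maps and a dimension/character count (or the cyclicity of $v\otimes v'$) must be used. Since the proposition is stated as an immediate consequence of results already available in \cite{jmhla}, in the write-up I would simply cite \cite[Corollary 3.5 and Equation (3-13)]{jmhla} for this step rather than reproving it, and spend the remaining lines on the clean inductive reduction to the evaluation modules.
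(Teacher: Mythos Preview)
Your proposal is correct and matches the paper's treatment: the paper gives no proof at all beyond the sentence ``The next proposition is an immediate consequence of \cite[Corollary 3.5 and Equation (3-13)]{jmhla}'' and a \qedsymbol, so your plan to cite that reference for the irreducibility of the tensor product and then run the obvious induction on $m$ for the ``in particular'' clause is exactly what is intended. Your write-up is more detailed than the paper's, but the substance is the same.
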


The next proposition is easily established.

\begin{prop}
Let $\lambda,\mu\in P^+, V=V_\mathbb F(\lambda)\otimes V_\mathbb F (\mu)$, and let $0\subseteq V_1\subseteq V_2\subseteq\cdots\subseteq V_m=V$ be a composition series for $V$. Then $0\subseteq V_1(a)\subseteq V_2(a)\subseteq\cdots\subseteq V_m(a)=V(a)$ is a composition series for $V_\mathbb F(\gb\omega_{\lambda,a})\otimes V_\mathbb F (\gb\omega_{\mu,a})$.\hfill\qedsymbol
\end{prop}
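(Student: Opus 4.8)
The plan is to deduce everything from the surjectivity of the evaluation map ${\rm ev}_a$ (Proposition \ref{p:evmap}) together with the monoidality of pull-back along it. First I would establish the identification $V(a)\cong V_\mathbb F(\gb\omega_{\lambda,a})\otimes V_\mathbb F(\gb\omega_{\mu,a})$. The map ${\rm ev}_a$ is a morphism of Hopf algebras: at the level of enveloping algebras it is induced by the Lie algebra evaluation $\tlie g\to\lie g$, $x\otimes f(t)\mapsto f(a)x$, and the resulting compatibility with the comultiplication passes to the integral forms and hence to $U(\tlie g)_\mathbb F$ and $U(\lie g)_\mathbb F$. Consequently the pull-back functor $W\mapsto W(a)$ is monoidal, so $(V_\mathbb F(\lambda)\otimes V_\mathbb F(\mu))(a)\cong (V_\mathbb F(\lambda))(a)\otimes (V_\mathbb F(\mu))(a)$, and the right-hand side is $V_\mathbb F(\gb\omega_{\lambda,a})\otimes V_\mathbb F(\gb\omega_{\mu,a})$ by the isomorphism $V_\mathbb F(\gb\omega_{\nu,a})\cong (V_\mathbb F(\nu))(a)$ recalled after Proposition \ref{p:evmap}.

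It then remains to show that $0\subseteq V_1(a)\subseteq\cdots\subseteq V_m(a)=V(a)$ is a composition series of $U(\tlie g)_\mathbb F$-modules. Here I would use the elementary fact that, because ${\rm ev}_a$ is onto, a subspace of a $U(\lie g)_\mathbb F$-module $W$ is a $U(\tlie g)_\mathbb F$-submodule of $W(a)$ if and only if it is a $U(\lie g)_\mathbb F$-submodule of $W$, and that $(W/W')(a)=W(a)/W'(a)$ for any submodule $W'$ of $W$. Applying the first statement with $W=V$ shows that each $V_j(a)$ is a $U(\tlie g)_\mathbb F$-submodule of $V(a)$ and that the inclusions remain strict; applying the second gives $V_j(a)/V_{j-1}(a)\cong (V_j/V_{j-1})(a)$. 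Since $0\subseteq V_1\subseteq\cdots\subseteq V_m=V$ is a composition series, $V_j/V_{j-1}$ is irreducible in $\cal C(\lie g)_\mathbb F$, hence isomorphic to $V_\mathbb F(\nu_j)$ for some $\nu_j\in P^+$ by Theorem \ref{t:cig}; therefore $V_j(a)/V_{j-1}(a)\cong (V_\mathbb F(\nu_j))(a)\cong V_\mathbb F(\gb\omega_{\nu_j,a})$, which is an irreducible $U(\tlie g)_\mathbb F$-module by Theorem \ref{t:ciwm}. Thus all successive quotients of the chain are irreducible, i.e.\ the chain is a composition series. Equivalently, one can phrase this lattice-theoretically: since ${\rm ev}_a$ is surjective, the submodule lattices of $V$ and of $V(a)$ coincide, so a maximal chain of submodules of $V$ is carried to a maximal chain of submodules of $V(a)$.

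The only step that is not purely formal is the compatibility of ${\rm ev}_a$ with the comultiplication invoked in the first paragraph, needed to identify $V(a)$ with $V_\mathbb F(\gb\omega_{\lambda,a})\otimes V_\mathbb F(\gb\omega_{\mu,a})$; this is immediate from the construction of the evaluation maps in \cite{jmhla}, so I expect no genuine obstacle. Everything else reduces to exactness of pull-back along an algebra homomorphism and to the fact that such a pull-back preserves the submodule lattice when the homomorphism is onto.
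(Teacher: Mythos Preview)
Your argument is correct and is exactly the natural one; the paper itself omits the proof entirely, merely stating that the proposition ``is easily established.'' Your two ingredients---monoidality of the pull-back along ${\rm ev}_a$ (giving $V(a)\cong V_\mathbb F(\gb\omega_{\lambda,a})\otimes V_\mathbb F(\gb\omega_{\mu,a})$) and surjectivity of ${\rm ev}_a$ (identifying the submodule lattices of $V$ and $V(a)$)---are precisely what the authors have in mind.
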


Using the last two propositions, one easily reduces the Clebsch-Gordan problem for $\cal C(\tlie g)_\mathbb F$ to the one for $\cal C(\lie g)_\mathbb F$. By Corollary \ref{c:char}, solutions for $\cal C(\lie g)_\mathbb F$ can be obtained from the knowledge of $\ch(V_\mathbb F(\lambda)), \lambda\in P^+$.  The later is an open problem in general, as mentioned in subsection \ref{ss:fdg}. If $\mathbb F$ has characteristic zero there are several formulas for computing Clebsch-Gordan coefficients in $\cal C(\lie g)_\mathbb F$ (see for instance \cite{fh}). New insights into this problem were introduced by the advent of crystal bases (see \cite{bz,litlw,lityt} and references therein).

\section{$\ell$-Characters}\label{s:jh}

\subsection{} In this section, we consider the notion of $\ell$-characters -- the classical analogue of the $q$-characters defined by Frenkel and Reshetikhin in \cite{frqchar}. This is equivalent to studying $\mult^{\cal C(\gb\Lambda)_\mathbb K}_{\cal K(\gb\varpi)}(V_\mathbb K(\gb\omega))$ for every $\gb\varpi\in\cal P_\mathbb F$ and every $\gb\omega\in\cal P_\mathbb F^+$. Here,
 $\cal C(\gb\Lambda)_\mathbb K$ is the category of finite-dimensional $\mathbb K[\gb\Lambda]$-modules. Evidently, every object in  $\cal C(\tlie g)_\mathbb K$ is also an object in $\cal C(\gb\Lambda)_\mathbb K$ (recall also Theorem \ref{t:ciwm}(a)).

\begin{defn} Let $V$ be a finite-dimensional $U(\tlie g)_\mathbb K$-module.  The $\ell$-character of $V$ is the function $\chl(V):\cal P_{\mathbb F,\mathbb K}\to\mathbb Z_+$ given by $[\gb\varpi]\mapsto \chl(V)_{\gb\varpi}:=\dim(V_\gb\varpi)$. 
\end{defn}

Quite clearly
\begin{equation}
\mult^{\cal C(\gb\Lambda)_\mathbb K}_{\cal K(\gb\varpi)}(V) = \frac{\chl(V)_\gb\varpi}{\deg(\gb\varpi)}.
\end{equation}
Hence, studying $\chl(V)$ is equivalent to studying the Jordan-H\"older multiplicities of $V$ in the category $\cal C(\gb\Lambda)_\mathbb K$.

Using the partial order on $\cal P_\mathbb F$ defined in \S\ref{ss:l-lattices}, one proves  the following analogue of Corollary \ref{c:char}  as a corollary of Theorem \ref{t:ciwm}.

\begin{cor}\label{c:charl}
Let $V$ be an object of $\cal C(\tlie g)_\mathbb K$ and $\gb\omega\in\cal P_\mathbb F^+$. The Jordan-H\"older multiplicity $\mult^{\mathbb K}_{\gb\omega}(V)$ is completely determined by $\chl(V)$.\hfill\qedsymbol
\end{cor}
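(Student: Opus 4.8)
The plan is to transcribe the proof of Corollary \ref{c:char}, replacing the weight decomposition of Theorem \ref{t:cig}(a) and the partial order on $P$ by the quasi-$\ell$-weight decomposition of Theorem \ref{t:ciwm}(a) and the partial order on $\cal P_\mathbb F$ from \S\ref{ss:l-lattices}.

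The first step is to observe that $\chl$ is additive along short exact sequences in $\cal C(\tlie g)_\mathbb K$. Indeed, for a finite-dimensional $U(\tlie g)_\mathbb K$-module $V$ the decomposition $V=\bigoplus_{[\gb\varpi]}V_{\gb\varpi}$ of Theorem \ref{t:ciwm}(a) is nothing but the primary decomposition of $V$ regarded as a module over the commutative algebra $\mathbb K[\gb\Lambda]$, the summand $V_{\gb\varpi}$ being the largest submodule all of whose $\mathbb K[\gb\Lambda]$-constituents are isomorphic to $\cal K(\gb\varpi)$ (recall Theorem \ref{t:irpa}); such a decomposition is functorial and exact, so $\dim V_{\gb\varpi}$, and hence $\chl(V)_{\gb\varpi}$, is additive. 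Therefore $\chl$ factors through a homomorphism out of ${\rm Gr}(\cal C(\tlie g)_\mathbb K)$, and evaluating it on a composition series of $V$ yields
$$\chl(V)=\sum_{[\gb\omega]\in\cal P_{\mathbb F,\mathbb K}^+}\mult^{\mathbb K}_{\gb\omega}(V)\ \chl(V_\mathbb K(\gb\omega)),$$
a sum with only finitely many nonzero terms.

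For the second step I would pull $\chl$ back along the quotient $\cal P_\mathbb F\to\cal P_{\mathbb F,\mathbb K}$: let $\widehat\chi,\widehat\chi_{\gb\omega}\colon\cal P_\mathbb F\to\mathbb Z_+$ be the ${\rm Aut}(\mathbb F/\mathbb K)$-invariant, finitely supported functions obtained from $\chl(V)$ and $\chl(V_\mathbb K(\gb\omega))$, so that $\widehat\chi=\sum_{[\gb\omega]}\mult^{\mathbb K}_{\gb\omega}(V)\,\widehat\chi_{\gb\omega}$. By Theorem \ref{t:ciwm}(e), $\widehat\chi_{\gb\omega}$ is supported on those $\gb\varpi\in\cal P_\mathbb F$ having a conjugate $\le\gb\omega$, and takes the value $\deg(\gb\omega)$ at every conjugate of $\gb\omega$, the latter because the highest-quasi-$\ell$-weight space of $V_\mathbb K(\gb\omega)$ is isomorphic to $\cal K(\gb\omega)$. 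Since the partial order $\le$ and the assignment $\gb\omega\mapsto V_\mathbb K(\gb\omega)$ are ${\rm Aut}(\mathbb F/\mathbb K)$-equivariant (the monoid $\cal Q_\mathbb F^+$ is generated by the $\gb\omega_{\alpha,a}$ and $g(\gb\omega_{\alpha,a})=\gb\omega_{\alpha,g(a)}$; cf.\ Theorem \ref{t:ciwm}(d)), for $\gb\omega$ maximal in ${\rm supp}(\widehat\chi)$ the only summand above contributing at $\gb\omega$ is the one indexed by $[\gb\omega]$; hence $\mult^{\mathbb K}_{\gb\omega}(V)=\chl(V)_{\gb\omega}/\deg(\gb\omega)$. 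Subtracting $\sum\mult^{\mathbb K}_{\gb\omega}(V)\,\widehat\chi_{\gb\omega}$ over a set of representatives of the maximal conjugacy classes leaves the pulled-back $\ell$-character of the direct sum of the remaining composition factors of $V$, whose support is strictly smaller; iterating recovers all the $\mult^{\mathbb K}_{\gb\omega}(V)$ from $\chl(V)$. Equivalently, restricted to the finite $\le$-down-set ${\rm supp}(\widehat\chi)$ the identity displayed above is a triangular linear system for the unknowns $\mult^{\mathbb K}_{\gb\omega}(V)$ with diagonal entries $\deg(\gb\omega)\in\mathbb N$, hence has a unique solution.

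The part I expect to need the most care is the conjugacy-class bookkeeping in the second step — verifying that the maximal elements of ${\rm supp}(\widehat\chi)$ are precisely the highest-quasi-$\ell$-weights of composition factors of $V$, rather than merely quasi-$\ell$-weights of $V$, and that the subtraction above does not overcount when two maximal elements happen to be conjugate. As indicated, both reduce to the ${\rm Aut}(\mathbb F/\mathbb K)$-equivariance of $\le$ and of $\gb\omega\mapsto V_\mathbb K(\gb\omega)$ together with parts (d) and (e) of Theorem \ref{t:ciwm}, and are otherwise routine.
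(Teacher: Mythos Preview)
Your proposal is correct and follows exactly the route the paper indicates: the paper does not give an explicit proof but merely remarks that, using the partial order on $\cal P_\mathbb F$ from \S\ref{ss:l-lattices}, one proves the statement as an analogue of Corollary \ref{c:char} via Theorem \ref{t:ciwm}. Your two steps---additivity of $\chl$ on short exact sequences, followed by the triangular-subtraction algorithm governed by Theorem \ref{t:ciwm}(e)---are precisely the intended argument, and your care with the ${\rm Aut}(\mathbb F/\mathbb K)$-equivariance of the order and of $\gb\omega\mapsto V_\mathbb K(\gb\omega)$ handles the only subtlety the paper leaves implicit.
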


A similar coment as the one we made after Corollary \ref{c:char} is in place here, as well. Namely, the knowledge of $\chl(V_\mathbb K(\gb\varpi))$ for every $\gb\varpi\in\cal P_\mathbb F^+$ is required in order to carry out, in practice, the computation of $\mult^{\mathbb K}_{\gb\omega}(V)$ from a given $\chl(V)$.

\subsection{} The results of \S\ref{ss:cgac} and \eqref{e:char} enable us to reduce the problem of computing $\chl(V_\mathbb F(\gb\omega))$ for $\gb\omega\in\cal P_\mathbb F^+$ to those of computing  Clebsch-Gordan coefficients for $\cal C(\lie g)_\mathbb F$ as well as $\ch(V_\mathbb F(\lambda))$, $\lambda\in P^+$. The next theorem expresses $\chl(V_\mathbb K(\gb\omega))$ and $\chl(W_\mathbb K(\gb\omega))$ in terms of $\chl(V_\mathbb F(\gb\omega))$  and $\chl(W_\mathbb F(\gb\omega))$, respectively.

\begin{thm}\label{t:charl} For every $\gb\omega\in\cal P_\mathbb F^+$ and every $\gb\varpi\in\cal P_\mathbb F$ we have
$$\chl(V_\mathbb K(\gb\omega))_\gb\varpi = \deg(\gb\omega)\sum_{\gb\varpi'\in[\gb\varpi]} \chl(V_\mathbb F(\gb\omega))_{\gb\varpi'}$$
and
\begin{equation*}
\chl(W_\mathbb K(\gb\omega))_\gb\varpi = \deg(\gb\omega)\sum_{\gb\varpi'\in[\gb\varpi]} \chl(W_\mathbb F(\gb\omega))_{\gb\varpi'}.
\end{equation*}
\end{thm}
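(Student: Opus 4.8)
The plan is to derive the formula from the base-change statement of Theorem~\ref{t:forms}(b) together with an $\ell$-weight refinement of Lemma~\ref{l:compseries}. I will carry out the argument for $V_\mathbb K(\gb\omega)$; the case of $W_\mathbb K(\gb\omega)$ is word-for-word the same, using the second half of Theorem~\ref{t:forms}(b). The first step is to record a general fact: for every object $V$ of $\cal C(\tlie g)_\mathbb K$ and every $\gb\varpi\in\cal P_\mathbb F$,
\[
\chl(V)_\gb\varpi \;=\; \sum_{\gb\varpi'\in[\gb\varpi]} \chl(V^\mathbb F)_{\gb\varpi'}.
\]
To see this, decompose $V = \bigoplus_{[\gb\pi]} V_\gb\pi$ as in Theorem~\ref{t:ciwm}(a); base change gives $V^\mathbb F = \bigoplus_{[\gb\pi]} (V_\gb\pi)^\mathbb F$ as $\mathbb F[\gb\Lambda]$-modules. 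Since $V_\gb\pi$ is a self-extension of $\cal K(\gb\pi)$, Lemma~\ref{l:compseries} together with Theorem~\ref{t:irpa}(c) shows that every composition factor of the $\mathbb F[\gb\Lambda]$-module $(V_\gb\pi)^\mathbb F$ is isomorphic to some $\cal F(\gb\pi')$ with $\gb\pi'\in[\gb\pi]$. As $\cal F(\gb\sigma)\cong\cal F(\gb\sigma')$ forces $\gb\sigma=\gb\sigma'$, distinct conjugacy classes contribute disjoint sets of $\mathbb F[\gb\Lambda]$-weights; hence the $\ell$-weight space $(V^\mathbb F)_{\gb\varpi'}$, which is the $\gb\varpi'$-primary component of $V^\mathbb F$ as an $\mathbb F[\gb\Lambda]$-module, lies inside $(V_\gb\varpi)^\mathbb F$ when $\gb\varpi'\in[\gb\varpi]$ and vanishes otherwise. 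Summing dimensions over $\gb\varpi'\in[\gb\varpi]$ recovers the full primary decomposition of $(V_\gb\varpi)^\mathbb F$, so the right-hand side above equals $\dim_\mathbb F (V_\gb\varpi)^\mathbb F = \dim_\mathbb K V_\gb\varpi = \chl(V)_\gb\varpi$.

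Next I would take $V = V_\mathbb K(\gb\omega)$. By Theorem~\ref{t:forms}(b), $(V_\mathbb K(\gb\omega))^\mathbb F \cong \bigoplus_{\gb\omega'\in[\gb\omega]} V^\mathbb F(\gb\omega')$ with each $V^\mathbb F(\gb\omega')$ of length $\ideg(\gb\omega)$ over $V_\mathbb F(\gb\omega')$; since $\chl$ is additive on short exact sequences (the $\ell$-weight spaces being generalized eigenspaces for $\mathbb F[\gb\Lambda]$), this gives $\chl((V_\mathbb K(\gb\omega))^\mathbb F) = \ideg(\gb\omega)\sum_{\gb\omega'\in[\gb\omega]}\chl(V_\mathbb F(\gb\omega'))$. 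Feeding this into the general fact yields
\[
\chl(V_\mathbb K(\gb\omega))_\gb\varpi \;=\; \ideg(\gb\omega) \sum_{\gb\omega'\in[\gb\omega]} \sum_{\gb\varpi'\in[\gb\varpi]} \chl(V_\mathbb F(\gb\omega'))_{\gb\varpi'}.
\]

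It remains to show that $\sum_{\gb\varpi'\in[\gb\varpi]}\chl(V_\mathbb F(\gb\sigma))_{\gb\varpi'}$ is independent of $\gb\sigma\in[\gb\omega]$; granting this, the inner double sum equals $|[\gb\omega]|\sum_{\gb\varpi'\in[\gb\varpi]}\chl(V_\mathbb F(\gb\omega))_{\gb\varpi'}$ and the theorem follows from $\ideg(\gb\omega)\,|[\gb\omega]| = \deg(\gb\omega)$. For the independence I would invoke the Galois equivariance of the construction from \cite{jmnacf}: for $g\in{\rm Aut}(\mathbb F/\mathbb K)$ the twist ${}^gV_\mathbb F(\gb\omega)$ is isomorphic to $V_\mathbb F(g(\gb\omega))$, and twisting permutes $\ell$-weight spaces according to $g$, so $\chl(V_\mathbb F(g(\gb\omega)))_{\gb\varpi'} = \chl(V_\mathbb F(\gb\omega))_{g^{-1}(\gb\varpi')}$; since $g^{-1}$ permutes the finite set $[\gb\varpi]$, the sum is unchanged.

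The main obstacle is the bookkeeping in the general fact, especially verifying carefully that the $\ell$-weight space of the $U(\tlie g)_\mathbb F$-module $V^\mathbb F$ agrees with the appropriate $\mathbb F[\gb\Lambda]$-primary component of $(V_\gb\varpi)^\mathbb F$, and that different conjugacy classes genuinely produce disjoint $\mathbb F[\gb\Lambda]$-weights. A secondary point to pin down explicitly (and cite) is the Galois-twist identity $V_\mathbb F(g(\gb\omega))\cong {}^gV_\mathbb F(\gb\omega)$, together with its analogue for $W_\mathbb F(\gb\omega)$, used in the last step.
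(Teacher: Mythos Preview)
Your argument is correct and follows essentially the same route as the paper: establish the general identity $\chl(V)_{\gb\varpi}=\sum_{\gb\varpi'\in[\gb\varpi]}\chl(V^\mathbb F)_{\gb\varpi'}$ via Theorem~\ref{t:irpa}(c) and Lemma~\ref{l:compseries}, then combine Theorem~\ref{t:forms}(b) with the Galois-twist invariance $\sum_{\gb\varpi'\in[\gb\varpi]}\chl(V_\mathbb F(\gb\omega))_{\gb\varpi'}=\sum_{\gb\varpi'\in[\gb\varpi]}\chl(V_\mathbb F(\gb\omega'))_{\gb\varpi'}$ to extract the factor $\deg(\gb\omega)=\ideg(\gb\omega)\,|[\gb\omega]|$. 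The paper proceeds identically, citing the proof of Theorem~2.12 and \S A.3 of \cite{jmnacf} for the Galois-twist isomorphism you flagged as needing a reference.
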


\begin{proof}
Set $V=V_\mathbb K(\gb\omega)$. By Theorem \ref{t:irpa}(c) and  Lemma \ref{l:compseries} applied to a composition series of $V$ in the category $\cal C(\gb\Lambda)_\mathbb K$ we have
$$\chl(V)_\gb\varpi = \sum_{\gb\varpi'\in[\gb\varpi]}\chl(V^\mathbb F)_{\gb\varpi'}.$$
On the other hand, it is not difficult to see from the proof of  Theorem \ref{t:forms}(b) (see the proof of Theorem 2.12 and \S A.3 of \cite{jmnacf})  that every $g\in{\rm Aut}(\mathbb F/\mathbb K)$ induces an isomorphism of $U(\tlie g)_\mathbb K$-modules $V_\mathbb F(\gb\omega)\to V_\mathbb F(g(\gb\omega))$. In particular, for every $\gb\omega'\in[\gb\omega]$ and every $\gb\varpi\in\cal P_\mathbb F$, we have
$$\sum_{\gb\varpi'\in[\gb\varpi]}\chl(V_\mathbb F(\gb\omega))_{\gb\varpi'} = \sum_{\gb\varpi'\in[\gb\varpi]}\chl(V_\mathbb F(\gb\omega'))_{\gb\varpi'}.$$
Since, by Theorem \ref{t:forms}(b), $V^\mathbb F$ is a $U(\tlie g)_\mathbb F$-module of length $\deg(\gb\omega)$ whose irreducible constituents are of the form $V_\mathbb F(\gb\omega')$ with $\gb\omega'\in[\gb\omega]$, it follows that
$$\sum_{\gb\varpi'\in[\gb\varpi]}\chl(V^\mathbb F)_{\gb\varpi'}=\deg(\gb\omega)\sum_{\gb\varpi'\in[\gb\varpi]}\chl(V_\mathbb F(\gb\omega))_{\gb\varpi'}.$$
This proves the first formula. For the second one, we observe that all the above steps can be carried out with $W_\mathbb K(\gb\omega)$ in place of $V_\mathbb K(\gb\omega)$ with the corresponding obvious modifications.
\end{proof}

\subsection{}\label{ss:cgec} Due to Corollary \ref{c:charl}, the formula of Theorem \ref{t:cg} can, in principle, be recovered from $\chl(V)$ with $V=V_\mathbb K(\gb\varpi)\otimes V_\mathbb K(\gb\pi)$. In the algebraically closed case, the notion of $\ell$-character can be reinterpreted as a ring homomorphism $\chl:{\rm Gr}(\cal C(\tlie g)_\mathbb F) \to \mathbb Z[\cal P_\mathbb F]$ where ${\rm Gr}(\cal C(\tlie g)_\mathbb F)$ is the Grothendieck ring of $\cal C(\tlie g)_\mathbb F$ and $\mathbb Z[\cal P_\mathbb F]$ is the  integral group ring of $\cal P_\mathbb F$. In particular, if $V$ and $W$ are objects in $\cal C(\tlie g)_\mathbb F$ and $M$ is an extension of $V$ by $W$ we have
\begin{equation}\label{e:charl}
\chl(M)=\chl(V)+\chl(W)\ \ \text{and}\ \ \chl(V\otimes W)=\chl(V)\ \chl(W).
\end{equation}
The second formula follows easily from the comultiplication of the elements $\Lambda_{i,r}$ (cf. the proof of \cite[Corollary 3.5]{jmhla}).
In the non-algebraically closed case, $\cal P_{\mathbb F,\mathbb K}$ is not a group, but we can still form the free $\mathbb Z$-module $\mathbb Z[\cal P_{\mathbb F,\mathbb K}]$ and reinterpret $\ell$-character as a group homomorphism from the Grothendieck group of $\cal C(\tlie g)_\mathbb K$ to  $\mathbb Z[\cal P_{\mathbb F,\mathbb K}]$. We again have $\chl(M)=\chl(V)+\chl(W)$ whenever $M$ is an extension of $V$ by $W$. Although multiplication is not defined on $\mathbb Z[\cal P_{\mathbb F,\mathbb K}]$ and, hence, the second formula of \eqref{e:charl} does not make sense anymore, we have a replacement formula given  by Theorem \ref{t:tpcharl} below in the case of irreducible modules.

Given $\eta\in\mathbb Z[\cal P_\mathbb F]$, say
$$\eta = \sum_{\gb\varpi\in\cal P_\mathbb F} \eta(\gb\varpi)\ \gb\varpi \qquad\text{with}\quad \eta(\gb\varpi)\in\mathbb Z,$$
define
\begin{equation}\label{e:[charl]}
[\eta] = \sum_{[\gb\varpi]\in\cal P_{\mathbb F,\mathbb K}} \eta([\gb\varpi])\ [\gb\varpi]\in\mathbb Z[\cal P_{\mathbb F,\mathbb K}] \qquad\text{where}\quad \eta([\gb\varpi]) = \sum_{\gb\varpi'\in[\gb\varpi]} \eta(\gb\varpi').
\end{equation}

{\samepage
\begin{thm}\label{t:tpcharl}
For every $\gb\varpi,\gb\pi\in\cal P_\mathbb F^+$,
\begin{gather*}
\chl(V_\mathbb K(\gb\varpi)\otimes V_\mathbb K(\gb\pi)) = \\
\left[ \ideg(\gb\varpi)\ \ideg(\gb\pi)  \sum_{\gb\varpi'\in[\gb\varpi]}\sum_{\gb\pi'\in[\gb\pi]} \chl(V_\mathbb F(\gb\varpi'))\ \chl(V_\mathbb F(\gb\pi')) \right].
\end{gather*}
\end{thm}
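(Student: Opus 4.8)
The plan is to base-change to $\mathbb F$, where $\chl$ is multiplicative, compute there, and then collapse along conjugacy classes. Write $V=V_\mathbb K(\gb\varpi)\otimes_\mathbb K V_\mathbb K(\gb\pi)$, an object of $\cal C(\tlie g)_\mathbb K$, and regard its $\ell$-character as $\chl(V)=\sum_{[\gb\sigma]\in\cal P_{\mathbb F,\mathbb K}}\dim(V_{\gb\sigma})\,[\gb\sigma]\in\mathbb Z[\cal P_{\mathbb F,\mathbb K}]$. The first thing to establish is the identity $\chl(V)=[\chl(V^\mathbb F)]$, valid for \emph{any} object $V$ of $\cal C(\tlie g)_\mathbb K$, where $\chl(V^\mathbb F):=\sum_{\gb\sigma'\in\cal P_\mathbb F}\dim((V^\mathbb F)_{\gb\sigma'})\,\gb\sigma'\in\mathbb Z[\cal P_\mathbb F]$ and $[\,\cdot\,]$ is the map of \eqref{e:[charl]}. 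This is proved exactly as the first step of the proof of Theorem \ref{t:charl}, which applies verbatim to any such $V$: take a composition series of $V$ in $\cal C(\gb\Lambda)_\mathbb K$ (Theorem \ref{t:irpa}(a)), so its factors are of the form $\cal K(\gb\sigma_j)$, each concentrated at the single conjugacy class $[\gb\sigma_j]$ with total dimension $\deg(\gb\sigma_j)$; apply Lemma \ref{l:compseries} to get a filtration of $V^\mathbb F$ with factors $\cal K(\gb\sigma_j)^\mathbb F$; and use Theorem \ref{t:irpa}(c), by which $\cal K(\gb\sigma_j)^\mathbb F$ contributes the constant $\ideg(\gb\sigma_j)$ to the dimension of each $\ell$-weight space indexed by an element of $[\gb\sigma_j]$. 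Since $|[\gb\sigma_j]|\,\ideg(\gb\sigma_j)=\deg(\gb\sigma_j)$, summing the contributions over $j$ and over the elements of $[\gb\sigma]$ yields $\dim(V_{\gb\sigma})=\sum_{\gb\sigma'\in[\gb\sigma]}\dim((V^\mathbb F)_{\gb\sigma'})$, i.e. $\chl(V)=[\chl(V^\mathbb F)]$.

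It then remains to compute $\chl(V^\mathbb F)$ for our specific $V$. By Lemma \ref{l:tpfe}, $V^\mathbb F\cong(V_\mathbb K(\gb\varpi))^\mathbb F\otimes_\mathbb F(V_\mathbb K(\gb\pi))^\mathbb F$ as $U(\tlie g)_\mathbb F$-modules. By Theorem \ref{t:forms}(b), $(V_\mathbb K(\gb\varpi))^\mathbb F\cong\bigoplus_{\gb\varpi'\in[\gb\varpi]}V^\mathbb F(\gb\varpi')$ with each $V^\mathbb F(\gb\varpi')$ a length-$\ideg(\gb\varpi)$ self-extension of $V_\mathbb F(\gb\varpi')$; additivity of $\chl$ on short exact sequences then gives $\chl(V^\mathbb F(\gb\varpi'))=\ideg(\gb\varpi)\,\chl(V_\mathbb F(\gb\varpi'))$, and likewise for $\gb\pi$. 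Distributing the tensor product over the two direct sums and applying the multiplicativity $\chl(A\otimes B)=\chl(A)\,\chl(B)$ of \eqref{e:charl} to each summand $V^\mathbb F(\gb\varpi')\otimes_\mathbb F V^\mathbb F(\gb\pi')$ gives
\[
\chl(V^\mathbb F)=\ideg(\gb\varpi)\,\ideg(\gb\pi)\sum_{\gb\varpi'\in[\gb\varpi]}\sum_{\gb\pi'\in[\gb\pi]}\chl(V_\mathbb F(\gb\varpi'))\,\chl(V_\mathbb F(\gb\pi')).
\]
Applying $[\,\cdot\,]$ to both sides and invoking $\chl(V)=[\chl(V^\mathbb F)]$ from the first paragraph gives precisely the asserted formula.

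I do not expect a real obstacle: the argument is a bookkeeping assembly of Lemmas \ref{l:compseries} and \ref{l:tpfe}, Theorems \ref{t:irpa} and \ref{t:forms}, and \eqref{e:charl}. The one point demanding care is the identity $\chl(V)=[\chl(V^\mathbb F)]$ --- the verification that the $\mathbb K$-valued $\ell$-character is the collapse along conjugacy classes of the $\mathbb F$-valued one. This is where the inseparability degrees enter and then cancel out of the final formula, and it works precisely because of the uniform spreading of $\cal K(\gb\sigma)^\mathbb F$ over the $\ell$-weights of $[\gb\sigma]$ in Theorem \ref{t:irpa}(c). It is also worth noting that the right-hand side of the theorem is manifestly a well-defined element of $\mathbb Z[\cal P_{\mathbb F,\mathbb K}]$: the double sum runs over the full conjugacy classes $[\gb\varpi]$ and $[\gb\pi]$, so it involves no choice of representatives, and $[\,\cdot\,]$ carries $\mathbb Z[\cal P_\mathbb F]$ into $\mathbb Z[\cal P_{\mathbb F,\mathbb K}]$.
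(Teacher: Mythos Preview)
Your proof is correct and follows essentially the same approach the paper indicates: you combine the first step of the proof of Theorem~\ref{t:charl} (the identity $\chl(V)=[\chl(V^\mathbb F)]$ via Theorem~\ref{t:irpa}(c) and Lemma~\ref{l:compseries}) with the base-change computation from the proof of Theorem~\ref{t:cg} (Lemma~\ref{l:tpfe} and Theorem~\ref{t:forms}(b)), together with the multiplicativity \eqref{e:charl} over $\mathbb F$. The paper omits the details and simply refers to those two proofs, so your write-up is a faithful expansion of what the authors had in mind.
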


\begin{proof}
Similar to the proofs of Theorems \ref{t:cg} and \ref{t:charl}. We omit the details (see also the proof of Theorem \ref{t:mweyl} below).
\end{proof}}

Notice that the formulas of Theorem \ref{t:charl} may be rewritten using \eqref{e:[charl]} as
\begin{equation*}
\chl(V_\mathbb K(\gb\omega)) = \deg(\gb\omega)[\chl(V_\mathbb F(\gb\omega))]
\end{equation*}
and
\begin{equation*}
\chl(W_\mathbb K(\gb\omega)) = \deg(\gb\omega)[\chl(W_\mathbb F(\gb\omega))].
\end{equation*}
\vspace{-5pt}

\section{Jordan-H\"older Multiplicities for Weyl Modules}\label{s:jhw}

\subsection{}  We now address the problem of computing $\mult^{\mathbb K}_{\gb\varpi}(W_\mathbb K(\gb\omega))$ for every $\gb\omega,\gb\varpi\in\cal P_\mathbb F^+$.
 In light of Corollary \ref{c:charl}, it is in principle possible to combine the two formulas of Theorem \ref{t:charl} to compute $\mult^{\mathbb K}_{\gb\varpi}(W_\mathbb K(\gb\omega))$ in terms of $\chl(W_\mathbb F(\gb\omega))$. However, it is actually easier to proceed similarly to the proofs of Theorems \ref{t:cg} and \ref{t:charl} to obtain the following.

\begin{thm}\label{t:mweyl}
If $\gb\omega,\gb\varpi\in\cal P_\mathbb F^+$,
\begin{align*}
\mult^{\mathbb K}_{\gb\varpi}(W_\mathbb K(\gb\omega))  & = \frac{\ideg(\gb\omega)}{\ideg(\gb\varpi)}\sum_{\gb\omega'\in[\gb\omega]} \mult^{\mathbb F}_{\gb\varpi}(W_\mathbb F(\gb\omega'))\\
&=  \frac{\deg(\gb\omega)}{\deg(\gb\varpi)}\sum_{\gb\varpi'\in[\gb\varpi]} \mult^{\mathbb F}_{\gb\varpi'}(W_\mathbb F(\gb\omega)).
\end{align*}
\end{thm}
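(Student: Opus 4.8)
The plan is to mimic the proofs of Theorems \ref{t:cg} and \ref{t:charl}, exploiting Theorem \ref{t:forms}(b) twice — once for the Weyl module $W_\mathbb K(\gb\omega)$ and once for the simple module $V_\mathbb K(\gb\varpi)$ — and comparing the resulting expressions for the same quantity computed over $\mathbb F$. Concretely, set $W = W_\mathbb K(\gb\omega)$ and consider a composition series of $W$ in $\cal C(\tlie g)_\mathbb K$. Applying Lemma \ref{l:compseries} and then counting, for each $\gb\varpi'\in[\gb\varpi]$, how many times $V_\mathbb F(\gb\varpi')$ appears in $W^\mathbb F$, I would obtain two separate evaluations of $\mult^\mathbb F_{\gb\varpi'}(W^\mathbb F)$ summed appropriately.

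First I would run the argument through $W_\mathbb K(\gb\omega)$. By Theorem \ref{t:forms}(b), $W^\mathbb F\cong\bigoplus_{\gb\omega'\in[\gb\omega]} W^\mathbb F(\gb\omega')$ where each $W^\mathbb F(\gb\omega')$ is a self-extension of $W_\mathbb F(\gb\omega')$ of length $\ideg(\gb\omega)$; hence in the Grothendieck group $[W^\mathbb F] = \ideg(\gb\omega)\sum_{\gb\omega'\in[\gb\omega]} [W_\mathbb F(\gb\omega')]$. On the other hand, applying Lemma \ref{l:compseries} to a composition series of $W$ over $\mathbb K$ and invoking that each Jordan-H\"older factor $V_\mathbb K(\gb\sigma)$ contributes, after base change, $\deg(\gb\sigma)$ copies of the irreducible $\mathbb F$-modules in $[\gb\sigma]$ (again Theorem \ref{t:forms}(b)), I get that $\mult^\mathbb F_{\gb\varpi'}(W^\mathbb F)$ — summed over $\gb\varpi'\in[\gb\varpi]$ — equals $\deg(\gb\varpi)\,\mult^\mathbb K_{\gb\varpi}(W)$. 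Combined with the direct sum decomposition this yields $\deg(\gb\varpi)\,\mult^\mathbb K_{\gb\varpi}(W) = \ideg(\gb\omega)\sum_{\gb\omega'\in[\gb\omega]}\sum_{\gb\varpi'\in[\gb\varpi]}\mult^\mathbb F_{\gb\varpi'}(W_\mathbb F(\gb\omega'))$. Using the Galois-symmetry observation from the proof of Theorem \ref{t:charl} — every $g\in{\rm Aut}(\mathbb F/\mathbb K)$ induces a $U(\tlie g)_\mathbb K$-module isomorphism $W_\mathbb F(\gb\omega')\to W_\mathbb F(g(\gb\omega'))$ permuting the $\gb\varpi'$ inside $[\gb\varpi]$ — I can collapse the inner sum to get $\sum_{\gb\omega'\in[\gb\omega]}\sum_{\gb\varpi'\in[\gb\varpi]}\mult^\mathbb F_{\gb\varpi'}(W_\mathbb F(\gb\omega')) = |[\gb\varpi]|\sum_{\gb\omega'\in[\gb\omega]}\mult^\mathbb F_{\gb\varpi}(W_\mathbb F(\gb\omega'))$, and since $\deg(\gb\varpi) = |[\gb\varpi]|\,\ideg(\gb\varpi)$ this gives the first equality. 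For the second equality I would instead collapse the sum over $\gb\omega'$, writing $\sum_{\gb\omega'}\sum_{\gb\varpi'}\mult^\mathbb F_{\gb\varpi'}(W_\mathbb F(\gb\omega')) = |[\gb\omega]|\sum_{\gb\varpi'\in[\gb\varpi]}\mult^\mathbb F_{\gb\varpi'}(W_\mathbb F(\gb\omega))$, and use $\deg(\gb\omega) = |[\gb\omega]|\,\ideg(\gb\omega)$.

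The main obstacle — really the only delicate point — is the bookkeeping around the Galois action: one must be sure that the double sum $\sum_{\gb\omega'\in[\gb\omega]}\sum_{\gb\varpi'\in[\gb\varpi]}\mult^\mathbb F_{\gb\varpi'}(W_\mathbb F(\gb\omega'))$ is genuinely symmetric enough to be collapsed either in the $\gb\omega'$ variable or in the $\gb\varpi'$ variable, which boils down to the fact that $\mult^\mathbb F_{g(\gb\varpi')}(W_\mathbb F(g(\gb\omega'))) = \mult^\mathbb F_{\gb\varpi'}(W_\mathbb F(\gb\omega'))$ for every $g\in{\rm Aut}(\mathbb F/\mathbb K)$; this is exactly the content of the isomorphism $W_\mathbb F(\gb\omega')\cong W_\mathbb F(g(\gb\omega'))$ of $U(\tlie g)_\mathbb K$-modules recalled in the proof of Theorem \ref{t:charl} (which in turn rests on \S A.3 of \cite{jmnacf}). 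Everything else is the now-routine combination of Lemma \ref{l:compseries} with the length/multiplicity data of Theorem \ref{t:forms}(b), exactly as in the proofs of Theorems \ref{t:cg} and \ref{t:charl}, so I would state those steps briefly and refer back rather than repeat them in full.
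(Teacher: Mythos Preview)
Your proposal is correct and follows essentially the same approach as the paper's proof: both combine Lemma \ref{l:compseries} with the two halves of Theorem \ref{t:forms}(b) and the Galois-equivariance of multiplicities to compare $\mult^\mathbb K_{\gb\varpi}(W_\mathbb K(\gb\omega))$ with $\mult^\mathbb F_{\gb\varpi'}(W^\mathbb F)$. The only cosmetic difference is organizational: the paper first collapses $\sum_{\gb\varpi'\in[\gb\varpi]}\mult^\mathbb F_{\gb\varpi'}(V^\mathbb F)$ to $|[\gb\varpi]|\,\mult^\mathbb F_{\gb\varpi}(V^\mathbb F)$ and then applies the Weyl-module decomposition of $V^\mathbb F$, whereas you first form the double sum $\sum_{\gb\omega'}\sum_{\gb\varpi'}$ and then collapse one variable at a time --- same ingredients, same delicate point, same result.
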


\begin{proof}
Set $V=W_\mathbb K(\gb\omega)$. It follows from the first part of Theorem \ref{t:forms}(b) together with Lemma \ref{l:compseries} applied to a composition series of $V$ in the category $\cal C(\tlie g)_\mathbb K$ that
\begin{equation*}
\mult^\mathbb K_{\gb\varpi}(V)\deg(\gb\varpi)  = \sum_{\gb\varpi'\in[\gb\varpi]} \mult^\mathbb F_{\gb\varpi'}(V^\mathbb F) = |[\gb\varpi]|\ \mult^\mathbb F_{\gb\varpi}(V^\mathbb F).
\end{equation*}
On the other hand, the second part of Theorem \ref{t:forms}(b) gives
\begin{equation*}
\mult^\mathbb F_{\gb\pi}(V^\mathbb F) = \ideg(\gb\omega) \sum_{\gb\omega'\in[\gb\omega]}\mult^\mathbb F_{\gb\pi}(W_\mathbb F(\gb\omega'))\qquad\text{for all}\qquad \gb\pi\in\cal P_\mathbb F^+.
\end{equation*}
This completes the proof of the first equality.

To prove the second equality, one proceeds similarly to the proof of Theorem \ref{t:charl} to get
$$\sum_{\gb\varpi'\in[\gb\varpi]}\mult^\mathbb F_{\gb\varpi'}(W_\mathbb F(\gb\omega)) = \sum_{\gb\varpi'\in[\gb\varpi]}\mult^\mathbb F_{\gb\varpi'}(W_\mathbb F(\gb\omega'))\qquad\text{for all}\qquad \gb\omega'\in[\gb\omega].$$
Now the second part of Theorem \ref{t:forms}(b) gives
\begin{equation*}
\sum_{\gb\varpi'\in[\gb\varpi]} \mult^\mathbb F_{\gb\varpi'}(V^\mathbb F) = \deg(\gb\omega)\sum_{\gb\varpi'\in[\gb\varpi]}\mult^\mathbb F_{\gb\varpi'}(W_\mathbb F(\gb\omega)).\qedhere
\end{equation*}
\end{proof}

\subsection{}
We now review the results leading to a method  for reducing the computation of  $\mult^{\mathbb F}_{\gb\varpi}(W_\mathbb F(\gb\omega))$ to the computation of certain multiplicities in the category $\cal C(\lie g)_\mathbb F$.

We observe that all the results of this subsection in the positive characteristic setting depend on the conjecture of \cite{jmhla} which says that all Weyl modules for $U(\tlie g)_\mathbb F$, when $\mathbb F$ is of positive characteristic, can be obtained by reduction modulo $p$ from appropriate Weyl modules in characteristic zero. This statement can be regarded as an analogue of a conjecture of Chari and Pressley \cite{cpweyl} saying that all Weyl modules for $U(\tlie g)_\mathbb C$ can be obtained as classical limits of appropriate Weyl modules for quantum affine algebras. Chari-Pressley's conjecture has been recently proved for $\lie g$ of type A in \cite{cl} and for simply laced $\lie g$ in \cite{fol}. Moreover, it has been pointed out by Nakajima that the general case follows from the global and crystal basis theory developed in \cite{bn,kascb,kaslz,nakq,nake} (see \cite{cl,fol} for brief summaries of Nakajima's arguments). We expect the conjecture of \cite{jmhla} can be proved using similar arguments.

We start with the following proposition.

\begin{prop}\label{p:cweyl}
Let $\lambda\in P^+$ and $a\in\mathbb F^\times$. If $0\subseteq V_1\subseteq V_2\subseteq\cdots\subseteq V_m=V$ is a composition series for $V=W_\mathbb F(\gb\omega_{\lambda,a})$ as a $U(\lie g)_\mathbb F$-module, then $0\subseteq V_1(a)\subseteq V_2(a)\subseteq\cdots\subseteq V_m(a)$ is a composition series for $W_\mathbb F(\gb\omega_{\lambda,a})$ as a $U(\tlie g)_\mathbb F$-module.
\end{prop}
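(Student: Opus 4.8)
\textbf{Proof proposal for Proposition \ref{p:cweyl}.}

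The plan is to exploit the fact that, by Proposition \ref{p:evrep} and its proof, the Weyl module $W_\mathbb F(\gb\omega_{\lambda,a})$ over $U(\tlie g)_\mathbb F$ is the pull-back via ${\rm ev}_a$ of the corresponding classical Weyl module $W_\mathbb F(\lambda)$ over $U(\lie g)_\mathbb F$; in other words, one has an equality (or canonical isomorphism) $W_\mathbb F(\gb\omega_{\lambda,a}) = W_\mathbb F(\lambda)(a)$ as $U(\tlie g)_\mathbb F$-modules, exactly as in the recalled fact $V_\mathbb F(\gb\omega_{\lambda,a})\cong(V_\mathbb F(\lambda))(a)$. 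Granting this, the $U(\tlie g)_\mathbb F$-submodule structure of $W_\mathbb F(\gb\omega_{\lambda,a})$ is governed entirely by the $U(\lie g)_\mathbb F$-submodule structure of $W_\mathbb F(\lambda)$ via the surjection ${\rm ev}_a$: since ${\rm ev}_a$ is surjective, a subspace of $W_\mathbb F(\lambda)$ is a $U(\lie g)_\mathbb F$-submodule if and only if it is a $U(\tlie g)_\mathbb F$-submodule of $W_\mathbb F(\lambda)(a)$, and the lattices of submodules coincide.

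The first step is therefore to establish $W_\mathbb F(\gb\omega_{\lambda,a})\cong(W_\mathbb F(\lambda))(a)$ as $U(\tlie g)_\mathbb F$-modules. This should follow from the universal property of Weyl modules recalled in Theorem \ref{t:ciwm}(c): on one hand, $(W_\mathbb F(\lambda))(a)$ is generated by a highest-$\ell$-weight vector of $\ell$-weight $\gb\omega_{\lambda,a}$ (the evaluation map sends the $\tlie h$-action to the $\lie h$-action in the prescribed way, so the highest weight vector of $W_\mathbb F(\lambda)$ acquires the $\ell$-weight $\gb\omega_{\lambda,a}$), which gives a surjection $W_\mathbb F(\gb\omega_{\lambda,a})\twoheadrightarrow(W_\mathbb F(\lambda))(a)$; on the other hand, a dimension/character count — using $\dim W_\mathbb F(\gb\omega_{\lambda,a})=\dim W_\mathbb F(\lambda)$, which is part of the cited results of \cite{jmhla} (and is where the positive-characteristic conjecture of \cite{jmhla} enters) — forces this surjection to be an isomorphism. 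Alternatively one invokes \cite[Corollary 3.5 and Eq. (3-13)]{jmhla} directly, as Proposition \ref{p:evrep} already does for the simple modules.

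The second step is the lattice-of-submodules argument: given a $U(\lie g)_\mathbb F$-composition series $0\subseteq V_1\subseteq\cdots\subseteq V_m=V$ of $W_\mathbb F(\lambda)$, each $V_j$ is ${\rm ev}_a$-stable simply because ${\rm ev}_a$ has image $U(\lie g)_\mathbb F$, so each $V_j(a)$ is a $U(\tlie g)_\mathbb F$-submodule of $W_\mathbb F(\gb\omega_{\lambda,a})$; and each successive quotient $V_j(a)/V_{j-1}(a)\cong(V_j/V_{j-1})(a)$ is a pull-back of a simple $U(\lie g)_\mathbb F$-module, hence is a simple $U(\tlie g)_\mathbb F$-module (again by surjectivity of ${\rm ev}_a$, a pull-back of a simple module is simple). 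Thus $0\subseteq V_1(a)\subseteq\cdots\subseteq V_m(a)$ is a $U(\tlie g)_\mathbb F$-composition series, which is the claim. This is routine and parallels the proposition stated just before Theorem \ref{t:cg} for tensor products of evaluation modules.

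The only genuine obstacle is the identification $W_\mathbb F(\gb\omega_{\lambda,a})\cong(W_\mathbb F(\lambda))(a)$ in positive characteristic, since a priori the hyper loop algebra Weyl module could be strictly larger than the pull-back of the classical one; this is precisely the content of (and is settled by) the conjecture of \cite{jmhla} flagged at the start of this subsection, so under that hypothesis the proof goes through as sketched. In characteristic zero the identification is standard and unconditional.
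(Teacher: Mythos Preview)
Your second step---the lattice-of-submodules argument via surjectivity of ${\rm ev}_a$---is correct, but your first step rests on an identification that is false. The loop Weyl module $W_\mathbb F(\gb\omega_{\lambda,a})$ is \emph{not} isomorphic to the pull-back of the classical Weyl module $W_\mathbb F(\lambda)$ for $U(\lie g)_\mathbb F$, and the dimension equality $\dim W_\mathbb F(\gb\omega_{\lambda,a})=\dim W_\mathbb F(\lambda)$ you invoke does not hold. Already in characteristic zero, where the classical Weyl module coincides with the simple module $V_\mathbb F(\lambda)$, the loop Weyl module is typically strictly larger than $V_\mathbb F(\gb\omega_{\lambda,a})=(V_\mathbb F(\lambda))(a)$: for $\lie g=\lie{sl}_2$ and $\lambda=m\omega_1$ one has $\dim W_\mathbb C(\gb\omega_{\lambda,a})=2^m$ while $\dim V_\mathbb C(\lambda)=m+1$. (The discussion immediately following the proposition, where $\ch(W_\mathbb F(\gb\omega_{\lambda,a}))$ is described as a product of characters of fundamental representations rather than as a single Weyl character, confirms this.) Your universal-property argument does give a surjection $W_\mathbb F(\gb\omega_{\lambda,a})\twoheadrightarrow(W_\mathbb F(\lambda))(a)$, but there is no dimension match available to upgrade it to an isomorphism.

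What is actually needed---and what the proposition is really encoding---is that the $U(\tlie g)_\mathbb F$-action on $W_\mathbb F(\gb\omega_{\lambda,a})$ factors through ${\rm ev}_a$; equivalently, $W_\mathbb F(\gb\omega_{\lambda,a})$ is an evaluation module, but of its own restriction to $U(\lie g)_\mathbb F$, not of the classical Weyl module. Once that is known, your lattice argument applies verbatim. The paper does not prove this fact here: in characteristic zero it simply cites \cite[Proposition~3.3]{cmsc}, and in positive characteristic it deduces the statement from the characteristic-zero case via the reduction-mod-$p$ conjecture of \cite{jmhla} (cf.\ \cite[Proposition~4.11]{jmhla}). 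Showing that $\ker({\rm ev}_a)$ annihilates $W_\mathbb F(\gb\omega_{\lambda,a})$ is not a formality---it requires the explicit presentation of the loop Weyl module and a genuine computation---so the gap in your argument cannot be patched without invoking those references or reproducing their content.
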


\begin{proof}
In characteristic zero this is proved in \cite[Proposition 3.3]{cmsc}. The positive characteristic case follows from the characteristic zero case using the conjecture of \cite{jmhla} (cf.  \cite[Proposition 4.11]{jmhla}).
\end{proof}

Hence, in the spirit of Corollary \ref{c:char}, in order to compute $\mult^{\mathbb F}_{\gb\varpi}(W_\mathbb F(\gb\omega))$ when $\gb\omega = \gb\omega_{\lambda,a}$, it suffices to know $\ch(W_\mathbb F(\gb\omega_{\lambda,a}))$.
In characteristic zero, this character may be computed as a by-product of the proofs of Chari-Pressley's conjecture mentioned above. Namely, characters are unchanged by taking classical limits. Moreover, it follows from \cite{bn,cmqc} that the Weyl modules for quantum affine algebras may be realized as tensor products of the so-called fundamental representations. Hence, by the multiplicative property of characters with respect to tensor products, it is left to compute the characters of the later representations. For this, and other related problems, we refer to \cite{cmfund,hermin} and references therein. Finally, the positive characteristic case then follows from the conjecture of \cite{jmhla}, since the conjecture implies the character of Weyl modules is unchanged by the reduction modulo $p$ process studied in \cite{jmhla}. In particular, $\ch(W_\mathbb F(\gb\omega_{\lambda,a}))$ depends only on $\lambda$ (and hence on $\lie g$), but not on $\mathbb F$ or $a$.

A method for computing $\mult^{\mathbb F}_{\gb\varpi}(W_\mathbb F(\gb\omega))$ for general $\gb\omega$ is now obtained by combining the special case $\gb\omega=\gb\omega_{\lambda,a}$ with the solutions of the Clebsch-Gordan problem for the category $\cal C(\tlie g)_\mathbb F$ and the next theorem.

\begin{thm}
Let $\gb\omega\in\cal P_\mathbb F^+$ and write $\gb\omega = \prod\limits_{j=1}^m \gb\omega_{\lambda_j,a_j}$ as in \eqref{e:factor}. Then $W_\mathbb F(\gb\omega)\cong \otm_{j=1}^m W_\mathbb F(\gb\omega_{\lambda_j,a_j})$.\hfill\qedsymbol
\end{thm}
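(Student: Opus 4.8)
The plan is to prove the tensor product decomposition $W_\mathbb F(\gb\omega)\cong\bigotimes_{j=1}^m W_\mathbb F(\gb\omega_{\lambda_j,a_j})$ by exhibiting the right-hand side as a highest-$\ell$-weight module of the correct highest $\ell$-weight and then using a dimension/character count to upgrade a surjection to an isomorphism. First I would recall that since the factors $\gb\omega_{\lambda_j,a_j}$ are pairwise relatively prime (the $a_j$ are distinct in $\mathbb F^\times$), the analogous statement for the \emph{simple} modules already holds by Proposition~\ref{p:evrep}: $V_\mathbb F(\gb\omega)\cong\bigotimes_{j=1}^m V_\mathbb F(\gb\omega_{\lambda_j,a_j})$. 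The key structural input is the universal property of Weyl modules from Theorem~\ref{t:ciwm}(c): to produce a surjection $W_\mathbb F(\gb\omega)\twoheadrightarrow \bigotimes_j W_\mathbb F(\gb\omega_{\lambda_j,a_j})$ it suffices to show the target is a highest-$\ell$-weight module with highest $\ell$-weight $[\gb\omega]=[\prod_j\gb\omega_{\lambda_j,a_j}]$.

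Next I would verify this highest-$\ell$-weight property of the tensor product. Pick highest-$\ell$-weight vectors $w_j\in W_\mathbb F(\gb\omega_{\lambda_j,a_j})_{\gb\omega_{\lambda_j,a_j}}$ and set $w=w_1\otimes\cdots\otimes w_m$. Using the formula for the comultiplication of the $\Lambda_{i,r}$ (the same computation referenced after \eqref{e:charl} and in the proof of \cite[Corollary 3.5]{jmhla}), $w$ is an eigenvector for $\mathbb F[\gb\Lambda]$ with eigenvalue $\prod_j\gb\omega_{\lambda_j,a_j}=\gb\omega$, and it is annihilated by $U(\tlie n^+)_\mathbb F^0$ since each $w_j$ is and the coproduct is "upper triangular" with respect to the triangular decomposition. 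The remaining point — that $U(\tlie n^-)_\mathbb F w$ is all of the tensor product — is where the argument needs care: I would deduce it by a weight/$\ell$-weight filtration argument, or more cleanly by passing to the simple quotients, noting that each $W_\mathbb F(\gb\omega_{\lambda_j,a_j})/({\rm rad})\cong V_\mathbb F(\gb\omega_{\lambda_j,a_j})$ is generated by the image of $w_j$, so $U(\tlie n^-)_\mathbb F w$ surjects onto $\bigotimes_j V_\mathbb F(\gb\omega_{\lambda_j,a_j})$; combined with the Chari–Pressley/"reduction mod $p$" input (the conjecture of \cite{jmhla}) one knows the relevant generation statement for the Weyl module itself, or alternatively one invokes Proposition~\ref{p:cweyl} for each factor to control dimensions.

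With the surjection $\pi\colon W_\mathbb F(\gb\omega)\twoheadrightarrow\bigotimes_{j=1}^m W_\mathbb F(\gb\omega_{\lambda_j,a_j})$ in hand, the final step is a dimension count. By Proposition~\ref{p:cweyl}, $W_\mathbb F(\gb\omega_{\lambda_j,a_j})$ has the same character as the $U(\lie g)_\mathbb F$-Weyl module attached to $\lambda_j$ — in particular $\dim W_\mathbb F(\gb\omega_{\lambda_j,a_j})=\dim W_\mathbb F(\lambda_j)$ depends only on $\lambda_j$ — so the target has a dimension depending only on $(\lambda_1,\dots,\lambda_m)$. On the other hand, the "reduction mod $p$" discussion in the paragraph preceding this theorem tells us $\ch(W_\mathbb F(\gb\omega))$ depends only on the multiset $\{\lambda_j\}$ and not on the $a_j$; specializing the $a_j$ so that they are still distinct but arranging a direct comparison, or simply invoking the known characteristic-zero identity $W_\mathbb C(\gb\omega)\cong\bigotimes_j W_\mathbb C(\gb\omega_{\lambda_j,a_j})$ from \cite{cpweyl,cl,fol} together with invariance of characters under reduction mod $p$, gives $\dim W_\mathbb F(\gb\omega)=\prod_j\dim W_\mathbb F(\gb\omega_{\lambda_j,a_j})$. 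Hence $\pi$ is a surjection between spaces of equal finite dimension, so it is an isomorphism.

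The main obstacle is the surjectivity/generation step: showing that $\bigotimes_j W_\mathbb F(\gb\omega_{\lambda_j,a_j})$ is genuinely a highest-$\ell$-weight module (i.e.\ \emph{generated} by $w$), as opposed to merely containing $w$ as a highest-$\ell$-weight vector. In characteristic zero this is handled by the cyclicity results for tensor products of Weyl modules in \cite{cpweyl} (and its proofs in \cite{cl,fol}); in positive characteristic it is exactly here that the conjecture of \cite{jmhla} — compatibility of Weyl modules with reduction mod $p$ — must be used to transport the cyclicity statement, since the base-change lemma (Lemma~\ref{l:compseries}) only propagates composition series downward and does not by itself give cyclicity of a mod-$p$ reduction. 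Once cyclicity is granted, everything else is the routine comultiplication computation and the dimension bookkeeping sketched above.
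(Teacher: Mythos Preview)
The paper does not give its own proof of this theorem: it is stated with a \qedsymbol and the sentence immediately following attributes the characteristic-zero case to Chari--Pressley \cite{cpweyl} and says the positive-characteristic case follows from the conjecture of \cite{jmhla}. So there is nothing to compare your argument against in the paper itself.

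That said, your sketch is a faithful reconstruction of how the result is actually proved in the cited references: exhibit $\bigotimes_j W_\mathbb F(\gb\omega_{\lambda_j,a_j})$ as a highest-$\ell$-weight module of highest $\ell$-weight $\gb\omega$, invoke the universal property of $W_\mathbb F(\gb\omega)$ to obtain a surjection, and conclude by a dimension count. You also correctly isolate the genuine content --- cyclicity of the tensor product of Weyl modules --- and correctly flag that in positive characteristic this step (and the dimension count) rests on the conjecture of \cite{jmhla}, exactly as the paper says.

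One small misattribution: Proposition~\ref{p:cweyl} does \emph{not} say that $W_\mathbb F(\gb\omega_{\lambda,a})$ has the character of a ``$U(\lie g)_\mathbb F$-Weyl module attached to $\lambda$''; it only asserts that a $U(\lie g)_\mathbb F$-composition series of $W_\mathbb F(\gb\omega_{\lambda,a})$ is already a $U(\tlie g)_\mathbb F$-composition series. The dimension/character input you actually need for the final step is the content of the paragraph \emph{after} Proposition~\ref{p:cweyl} (characters via the Chari--Pressley conjecture, realization as tensor products of fundamentals, and invariance under reduction mod $p$ via the conjecture of \cite{jmhla}), not Proposition~\ref{p:cweyl} itself. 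With that correction your outline is sound.
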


The theorem above was proved in \cite{cpweyl} for the characteristic zero case and follows again from the conjecture of \cite{jmhla} for the positive characteristic setting.

\bibliographystyle{amsplain}

\end{document}